\pdfoutput=1
\RequirePackage[l2tabu, orthodox]{nag}
\documentclass[11pt,headsepline=true,toc=flat]{scrartcl}
\usepackage[colorlinks=true]{hyperref}

\usepackage[utf8]{inputenc}

\usepackage{xspace, xifthen, enumitem}

\usepackage{amssymb, amsmath, amsthm, thmtools, nth}

\usepackage{graphicx}

\usepackage{etoolbox}

\usepackage{tikz}
\usetikzlibrary{arrows,decorations.markings,chains,calc,matrix,cd}
\tikzset{>=cm to}

\usetikzlibrary{external}

\usepackage{environ}
\makeatletter
\def\tikzcd@[#1]{%
  \begin{tikzpicture}[/tikz/commutative diagrams/.cd,every diagram,#1]%
  \ifx\arrow\tikzcd@arrow%
    \pgfutil@packageerror{tikz-cd}{Diagrams cannot be nested}{}%
  \fi%
  \let\arrow\tikzcd@arrow%
  \let\ar\tikzcd@arrow%
  \def\rar{\tikzcd@xar{r}}%
  \def\lar{\tikzcd@xar{l}}%
  \def\dar{\tikzcd@xar{d}}%
  \def\uar{\tikzcd@xar{u}}%
  \def\urar{\tikzcd@xar{ur}}%
  \def\ular{\tikzcd@xar{ul}}%
  \def\drar{\tikzcd@xar{dr}}%
  \def\dlar{\tikzcd@xar{dl}}%
  \global\let\tikzcd@savedpaths\pgfutil@empty%
  \matrix[%
    /tikz/matrix of \iftikzcd@mathmode math \fi nodes,%
    /tikz/every cell/.append code={\tikzcdset{every cell}},%
    /tikz/commutative diagrams/.cd,every matrix]%
  \bgroup}

\def\endtikzcd{%
  \pgfmatrixendrow\egroup%
  \pgfextra{\global\let\tikzcdmatrixname\tikzlastnode};%
  \tikzcdset{\the\pgfmatrixcurrentrow-row diagram/.try}%
  \begingroup%
    \pgfkeys{
      /handlers/first char syntax/the character "/.initial=\tikzcd@forward@quotes,%
      /tikz/edge quotes mean={%
        edge node={node [execute at begin node=\iftikzcd@mathmode$\fi,
                         execute at end node=\iftikzcd@mathmode$\fi,
                         /tikz/commutative diagrams/.cd,every label,##2]{##1}}}}%
    \let\tikzcd@errmessage\errmessage
    \def\errmessage##1{\tikzcd@errmessage{##1^^J...^^Jl.\tikzcd@lineno\space%
        I think the culprit is a tikzcd arrow in cell \tikzcd@currentrow-\tikzcd@currentcolumn}}%
    \tikzcd@before@paths@hook%
    \tikzcd@savedpaths%
  \endgroup%
  \end{tikzpicture}%
  \ifnum0=`{}\fi}

\NewEnviron{mytikzcd}[1][]{%
  \def\@temp{\tikzcd@[#1]\BODY}%
  \expandafter\@temp\endtikzcd
}
\makeatother

\def\temp{&} \catcode`&=\active \let&=\temp

\usepackage[lining]{libertine}
\usepackage[T1]{fontenc}
\usepackage{textcomp}
\usepackage[varqu,varl]{inconsolata}
\usepackage[libertine]{newtxmath}
\usepackage{bm}
\usepackage{mathtools}
\mathtoolsset{mathic}

\usepackage[cal=boondoxo]{mathalfa}
\usepackage{mathrsfs}

\usepackage[all]{hypcap}

\usepackage{csquotes}
\usepackage[english]{babel}
\usepackage[nodayofweek]{datetime}

\usepackage[protrusion=true]{microtype}

\usepackage[bibencoding=utf8,style=alphabetic,citestyle=alphabetic,backref=true,hyperref=true,giveninits=true,doi=true]{biblatex}
\addbibresource{all.bib}

\renewbibmacro{in:}{%
  \ifentrytype{article}{}{\printtext{\bibstring{in}\intitlepunct}}}
\renewrobustcmd*{\bibinitdelim}{\,}
\AtEveryBibitem{%
  \clearfield{pagetotal}%
}

\usepackage{pdfpages}

\declaretheoremstyle[spaceabove=\topsep,spacebelow=\topsep,headfont=\normalfont\scshape,notefont=\normalfont\mdseries,notebraces={(}{)},bodyfont=\normalfont,postheadspace=5pt plus 1pt minus 1pt]{scdef}
\declaretheoremstyle[spaceabove=\topsep,spacebelow=\topsep,headfont=\normalfont\scshape,notefont=\normalfont\mdseries,notebraces={(}{)},bodyfont=\itshape,postheadspace=5pt plus 1pt minus 1pt]{scthm}

\declaretheorem[style=scdef,numberwithin=section,   name=Definition,refname={Definition,Definitions},Refname={Definition,Definitions}]{definition}
\declaretheorem[style=scdef,sharenumber=definition, name=Remark,refname={Remark,Remarks},Refname={Remark,Remarks}]{remark}

\declaretheorem[style=scthm,sharenumber=definition, name=Theorem,refname={Theorem,Theorems},Refname={Theorem,Theorems}]{theorem}
\declaretheorem[style=scthm,sharenumber=definition, name=Lemma,refname={Lemma,Lemmas},Refname={Lemma,Lemmas}]{lemma}

\declaretheorem[style=scthm,sharenumber=definition, name=Proposition,refname={Proposition,Propositions},Refname={Proposition,Propositions}]{proposition}

\undef\Re
\undef\Im

\newcommand*{\isom}{\cong}

\newcommand*{\wequiv}{\simeq}

\makeatletter
\let\@oldsubset=\subset
\def\@subsethelper#1#2{\mathrel{\raisebox{.5pt}{$#1\@oldsubset$}}\xspace}
\DeclareRobustCommand*{\subset}{\mathpalette\@subsethelper\relax}

\let\@oldotimes=\otimes
\def\@otimeshelper#1#2{\mathrel{\raisebox{.5pt}{$#1\@oldotimes$}}\xspace}
\DeclareRobustCommand*{\otimes}{\mathpalette\@otimeshelper\relax}
\makeatother

\tikzset{/tikz/commutative diagrams/arrows={thin}}

\renewcommand*{\cal}[1]{\ensuremath{\mathcal{#1}}\xspace}

\newcommand*{\ie}{i.\,e.}

\undef\lrcorner
\newcommand{\lrcorner}{\mathord{\vrule height 0.1ex depth 0pt width 1ex\vrule height 1.3ex depth 0pt width
0.1ex}}

\def\<#1>{\left\langle #1 \right\rangle}

\undef\AA
\undef\SS
\renewcommand*{\do}[1]{\expandafter\def\csname#1#1\endcsname{\ensuremath{\mathbb{#1}}\xspace}}
\docsvlist{A,B,C,D,E,F,G,H,I,J,K,L,M,N,O,P,Q,R,S,T,U,V,W,X,Y,Z}

\let\to\longrightarrow

\setlist[enumerate]{label={\normalfont \rmfamily(\roman*)}, nosep}
\setlist[itemize]{nosep}

\overfullrule=1mm

\usepackage{todonotes}

\undef\lim
\DeclareMathOperator*{\lim}{\textnormal{lim}}
\DeclareMathOperator*{\colim}{\textnormal{colim}}
\DeclareMathOperator*{\hocolim}{\textnormal{hocolim}}
\DeclareMathOperator*{\holim}{\textnormal{holim}}

\undef\hom
\DeclareMathOperator{\hom}{\textnormal{Hom}}
\DeclareMathOperator{\map}{\textnormal{Map}}

\DeclareMathOperator{\Spec}{\textnormal{Spec}}

\newcommand{\BGL}{\mathord{\textnormal{BGL}}}

\newcommand{\GL}{\mathord{\textnormal{GL}}}
\newcommand{\Gr}[2][]{\ifthenelse{\isempty{#1}}{%
  \mathord{\textnormal{Gr}}_{#2}}{%
  \operatorname{\textnormal{Gr}}_{#2}(#1)}}

\newcommand{\smashp}{\wedge}

\newcommand{\Tsusp}[1][]{\Sigma_{\PP^1}\ifthenelse{\isempty{#1}}{^\infty}{^{#1}}}
\newcommand{\transfer}[1][]{\operatorname{tr}\ifthenelse{\isempty{#1}}{}{_{#1}}}
\newcommand{\Tr}[1]{\operatorname{Tr}(#1)}
\newcommand{\proj}[1][]{\operatorname{\textnormal{proj}}\ifthenelse{\isempty{#1}}{}{_{#1}}}
\newcommand{\one}[1]{\mathbf{1}_{#1}}

\DeclareMathOperator{\SH}{\cal{S\mkern-1mu H}}
\undef\H
\DeclareMathOperator{\H}{\cal{H}}
\DeclareMathOperator{\Ind}{\textnormal{Ind}}

\newcommand{\Sm}[1]{\operatorname{\textnormal{Sm}}_{#1}}
\newcommand{\sm}{\textnormal{sm}}

\newcommand{\PrL}{\cal{P\mkern-4mu r}^{\textnormal{L}}}
\newcommand{\PrR}{\cal{P\mkern-4mu r}^{\textnormal{R}}}

\newcommand{\Op}{\textnormal{op}}

\DeclareMathOperator{\id}{\textnormal{id}}

\DeclareMathOperator{\unit}{\textnormal{unit}}
\DeclareMathOperator{\counit}{\textnormal{counit}}
\DeclareMathOperator{\Ex}{\textnormal{Ex}}

\DeclareMathOperator{\incl}{\textnormal{incl}}

\newcommand{\Deloop}[2][]{\ifthenelse{\isempty{#1}}{\textnormal{\bfseries B}#2}{\textnormal{\bfseries B}^{#1}#2}}
\newcommand{\deloop}[2][]{\ifthenelse{\isempty{#1}}{\textnormal{B}#2}{\textnormal{B}^{#1}#2}}

\DeclareMathOperator{\ev}{\textnormal{ev}}

\newcommand{\kanloop}[1]{\ifthenelse{\isempty{#1}}{\operatorname{\GG}}{\GG#1}}

\newcommand*{\infcat}{\(\infty\)–category\xspace}
\newcommand*{\infcats}{\(\infty\)–categories\xspace}

\newcommand{\ho}[1]{\textnormal{h}#1}

\DeclareMathOperator{\coev}{\textnormal{coev}}
\DeclareMathOperator{\eval}{\textnormal{ev}}
\DeclareMathOperator{\switch}{\textnormal{switch}}
\newcommand*{\dual}[1]{{#1}^{\vee}}

\let\setminus=\smallsetminus

\input{tikzarrows.tex}

\bibliography{all}

\usepackage{subfiles}

\RedeclareSectionCommand[counterwithin=section,font={\normalsize\normalfont\scshape}]{paragraph}

\begin{document}
\title{A Motivic Snaith Decomposition} \date{}
\author{Viktor Kleen}
\maketitle

\begin{abstract}
  \noindent{\normalsize\normalfont\scshape Abstract}\hskip1em
  The filtration \(\BGL_{0}\subset\dots\subset\BGL_{n-1}\subset\BGL_{n}\) is split by motivic Becker--Gottlieb transfers in the motivic stable homotopy category over any scheme.
  This recovers results by Snaith on the splitting of \(\BGL_{n}(\CC)\) in classical stable homotopy theory by passing to complex realizations.
  On the way, we extend motivic homotopy theory to smooth ind-schemes as bases and show how to construct the necessary fragment of the six operations and duality for this extension.
\end{abstract}

{\footnotesize
  \tableofcontents
}

\section{Introduction}

Snaith \parencite{MR539791,MR517089} described splittings of the natural filtrations on \(\BGL_{n}(\CC)\) and \(\BGL_{n}(\HH)\) in the classical stable homotopy category, using Becker--Gottlieb transfer associated with the normalizer of a maximal torus in \(\GL_{n}\).
He also obtained coarser results for \(\BGL_{n}(\RR)\) and for finite fields.
Using transfers associated with the block-diagonal subgroup \(\GL_{i}\times\GL_{n-i}\subset\GL_{n}\), \textcite{MR1000386} recovered Snaith's results on \(\BGL_{n}(\CC)\) and \(\BGL_{n}(\HH)\) and improve the ones on \(\BGL_{n}(\RR)\) and \(\BGL_{n}(\FF_{q})\).
In fact, the applicability of this approach to the case of \(\BGL_{n}(\CC)\) was first noted by Richter.

In this paper, we prove an analogue of Snaith's, Mitchell and Priddy's and Richter's result in the motivic stable homotopy category of an arbitrary base scheme \(S\):
\begin{theorem}\label{thm:main}
  Over any scheme \(S\), there is a \(\PP^1_{S}\)--stable splitting \(\BGL_{m,+} \wequiv \bigvee_{i=1}^{m} \BGL_i/\BGL_{i-1}\) of the natural filtration \(\BGL_{0}\subset\dots\subset\BGL_{m-1}\subset\BGL_{m}\).
\end{theorem}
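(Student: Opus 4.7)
The strategy is to mimic the Mitchell--Priddy--Richter approach in the motivic setting, using motivic Becker--Gottlieb transfers associated to the block-diagonal inclusions \(\GL_i\times\GL_{m-i}\into\GL_m\). Each such inclusion yields a smooth projective morphism \(B(\GL_i\times\GL_{m-i})\to\BGL_m\) of smooth ind-\(S\)-schemes with Grassmannian fibers \(\Gr[\AA^m]{i}\); Atiyah duality for this morphism produces a motivic Becker--Gottlieb transfer
\[
  \transfer[i]\colon \Tsusp\BGL_{m,+} \to \Tsusp B(\GL_i\times\GL_{m-i})_{+}
\]
in \(\SH(S)\), and post-composing with the projection \(B(\GL_i\times\GL_{m-i})\to\BGL_i\) and the quotient \(\BGL_{i,+}\to\BGL_i/\BGL_{i-1}\) yields candidate summand projections \(\varphi_i\colon\Tsusp\BGL_{m,+}\to\Tsusp\BGL_i/\BGL_{i-1}\).

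I would prove the splitting by induction on \(m\). Assuming the decomposition is known for \(\BGL_{m-1}\), it is enough to produce a stable retraction \(\rho\colon\Tsusp\BGL_{m,+}\to\Tsusp\BGL_{m-1,+}\) of the filtration inclusion, since the cofiber of this inclusion is \(\Tsusp\BGL_m/\BGL_{m-1}\) and a retraction forces the resulting cofiber sequence to split. The candidate \(\rho\) is \(\transfer[m-1]\) followed by the projection to the \(\BGL_{m-1}\)-factor of \(B(\GL_{m-1}\times\GL_1)\isom\BGL_{m-1}\times\BGL_1\). Base change for the Becker--Gottlieb transfer identifies the composite \(\Tsusp\BGL_{m-1,+}\into\Tsusp\BGL_{m,+}\xrightarrow{\rho}\Tsusp\BGL_{m-1,+}\) with the projective-bundle transfer for \(\PP(\cal{E}\oplus\cal{O})\to\BGL_{m-1}\), where \(\cal{E}\) is the tautological rank-\((m-1)\) bundle, composed with the map \((E,L)\mapsto(E\oplus\cal{O})/L\) classifying the universal quotient. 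A projective-bundle-formula and Euler-class calculation then shows this composite is the identity in \(\SH(S)\), supplying the required retraction and completing the inductive step.

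The main obstacle is foundational rather than computational. Both \(\BGL_m\) and the intermediate spaces \(B(\GL_i\times\GL_{m-i})\) are genuine smooth ind-schemes, not schemes, so the Becker--Gottlieb transfer relies on extending motivic homotopy theory and the necessary six-operations formalism, together with Atiyah duality and base change, to smooth ind-\(S\)-schemes. The bulk of the work is therefore the construction of this extended framework and the verification that smooth projective morphisms between smooth ind-schemes remain strongly dualizable with transfers that satisfy base change and are additive along stratifications by smooth subschemes. Once that infrastructure is in place, the retraction argument above is a direct motivic analogue of the classical Mitchell--Priddy computation, and passing to complex realizations at \(S=\Spec\CC\) recovers Snaith's original splitting.
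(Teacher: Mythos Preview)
Your overall strategy---Becker--Gottlieb transfers for the block-diagonal inclusions, induction on \(m\), and the foundational extension of motivic homotopy theory and the six operations to smooth ind-schemes---matches the paper's approach, and you have correctly identified that most of the work is infrastructural.

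The gap is in your retraction step. The composite
\[
  \BGL_{m-1,+}\xrightarrow{\ i_m\ }\BGL_{m,+}\xrightarrow{\ \rho\ }\BGL_{m-1,+}
\]
is \emph{not} the identity. Already on \(H_0\) (equivalently on \(\pi_0^s\)) the Becker--Gottlieb transfer for a bundle with fibre \(F\) is multiplication by \(\chi(F)\); since the projection \(q\) and the inclusion \(i_m\) are isomorphisms on \(H_0\), your composite acts as multiplication by \(\chi(\PP^{m-1})\). After complex realization this is \(m\), and motivically the categorical Euler characteristic of \(\PP^{m-1}\) in \(\End_{\SH(S)}(\one{S})\) is likewise not a unit. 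No projective-bundle or Euler-class identity can repair this: the obstruction is visible in degree zero, before any characteristic-class manipulation is available. So \(\rho\) is not a retraction of \(i_m\), nor is \(\rho\circ i_m\) an auto\-equivalence.

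The paper's remedy is to use \emph{all} the transfers \(\transfer[m,r]\) for \(0\le r\le m-1\) simultaneously and to pass to the associated-graded pieces. Writing \(\phi_{m,r}\) for the composite
\[
  \BGL_{m,+}\xrightarrow{\transfer[m,r]}\BGL_{r,+}\smashp\BGL_{m-r,+}\xrightarrow{\proj}\BGL_{r,+}\to\BGL_r/\BGL_{r-1},
\]
an additivity argument for the transfer (applied to the Schubert-type open/closed decomposition of the Grassmannian fibre \(\Gr[m]{r}\)) shows \(\phi_{m,r}\circ i_m=\phi_{m-1,r}\) for \(r<m\): the extra terms that spoil your identity on \(\BGL_{r,+}\) all factor through \(\BGL_{r-1,+}\to\BGL_{r,+}\) and hence vanish after passing to the cofibre. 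The inductive hypothesis is that \(\Phi=\bigvee_{r<m}\phi_{m-1,r}\) is an equivalence; then \(\Phi^{-1}\circ\bigl(\bigvee_{r<m}\phi_{m,r}\bigr)\) is the desired left inverse to \(i_m\). In short, a single transfer \(\transfer[m,m-1]\) is not enough; one needs the whole family together with the quotient to the filtration strata to kill the unwanted Euler-characteristic contributions.
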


Using topological realization, for \(S = \Spec(\CC)\) or \(S = \Spec(\RR)\), this recovers the classical story.
Our approach mirrors the technique of \textcite{MR1000386}, utilizing Becker--Gottlieb transfers associated with the block-diagonal inclusion \(\BGL_{i}\times\BGL_{n-i}\to\BGL_{n}\).
We make essential use of the six operations approach to the motivic Becker--Gottlieb transfer introduced by \textcite{MR3302973}.
The main technical difficulty in our approach to the splitting of \(\BGL_{n,+}\) is the fact that \(\BGL_{n}\) is not representable by a scheme.
However, it is representable by a smooth ind-scheme.

\paragraph*{Overview}
To obtain a transfer for the inclusion \(\BGL_{i}\times\BGL_{n-i}\to\BGL_{n}\) we use an explicit model of \(\BGL_{n}\) as the infinite Grassmannian \(\Gr{n}\).
We give an explicit presentation of the inclusion \(\BGL_{i}\times\BGL_{n-i}\to\BGL_{n}\) as a smooth Zariski--locally trivial bundle \(U\to\Gr{n}\) over the ind-scheme \(\Gr{n}\).
Then we construct a symmetric monoidal presentable stable \infcat \(\SH(\Gr{n})\) which accepts a functor from smooth ind-schemes over \(\Gr{n}\) and supports a functor \(\SH(\Gr{n})\to\SH(S)\).
The motivic Becker--Gottlieb transfer of \(U\to\Gr{n}\) is then defined to be the image in \(\SH(S)\) of the monoidal trace of \(U\) in \(\SH(\Gr{n})\).
We appeal to a result of \textcite{MR1867203} to see that this construction of Becker--Gottlieb transfers has a Mayer--Vietoris property.
This enables us to show that, just like in classical topology, the transfers of the inclusions \(\BGL_{i}\times\BGL_{n-i}\subset\BGL_{n}\) can be used to split the natural filtration on \(\BGL_{n}\).
Since the transfer is a stable phenomenon, this splitting is naturally only found in the stable motivic world.

\paragraph*{Notation}
In what follows \(S\) will be an arbitrary base scheme. For a scheme \(X\) we
write \(\H(X)\) for the \infcat of presheaves of spaces on \(\Sm{X}\) localized
at Nisnevich-local equivalences and projections \(Y\times\AA^1\to Y\). It is a
presentable \infcat in the sense of \parencite{mr2522659}. We refer to \(\H(X)\)
interchangably as the \(\AA^1\)--homotopy category of \(X\) or the motivic
homotopy category of \(X\). The associated pointed \infcat will be denoted by
\(\H_\bullet(X)\). Inverting \((\PP^1,\infty)\in\H_\bullet(X)\) with respect to
the smash product yields the stable motivic homotopy category \(\SH(X)\) of
\(X\). It is a symmetric monoidal, presentable, stable \infcat in the sense of
\parencite{higheralgebra}. An account of this definition of \(\H(X)\),
\(\H_\bullet(X)\) and \(\SH(X)\) for noetherian schemes and its equivalence to
the approach of \parencite{mv} is given in \parencite{MR3281141}, the
generalization to arbitrary schemes can be found in
\parencite[Appendix~C]{MR3302973}.

We follow \parencite{arxiv180610108L} in writing \(X/S\in\SH(S)\) for the
\(\PP^1\)--suspension spectrum of a smooth scheme \(X\) over \(S\). We will
write \(X_+\in \H_\bullet(S)\) for \(X\) with a disjoint basepoint added. We
sometimes do not distinguish notationally between the pointed motivic space
\(X_+\in\H_\bullet(S)\) and its \(\PP^1\)--suspension spectrum \(X/S =
X_+\in\SH(S)\).

When dealing with ind-schemes we have elected not to speak of
\enquote{ind-smooth} schemes and morphisms. Instead, for us a smooth morphisms
between ind-schemes will be what is usually called an ind-smooth morphism,
namely a formal colimit of smooth morphisms.

\paragraph*{Acknowledgements}
I would like to thank Marc Hoyois for countless fruitful conversations and Aravind Asok for his steady advice. Without them this paper would not exist.

\section{Becker--Gottlieb Transfers in Motivic Homotopy Theory}

Becker and Gottlieb introduced their eponymous transfer maps in
\parencite{MR0377873} as a tool for giving a simple proof of the Adams
conjecture. They considered a compact Lie group \(G\) and a fiber bundle \(E \to
B\) over a finite CW complex with structure group \(G\) and whose fiber \(F\) is
a closed smooth manifold with a smooth action by \(G\). There is a smooth
\(G\)-equivariant embedding \(F\subset V\) of \(F\) into a finite dimensional
representation \(V\) of \(G\). There is an associated Pontryagin--Thom collapse
map \(S^V\to F^{\nu}\) where \(\nu\) is the normal bundle of \(F\) in \(V\) and
\(F^\nu\) is its Thom space. Denoting by \(\tau\) the tangent bundle of \(F\)
one obtains a morphism
\[
  S^V \to F^\nu \to F^{\tau\oplus\nu}\wequiv F_+\smashp S^V
\]
in \(G\)-equivariant homotopy theory. Assuming that \(E\to B\) is associated to
a principal \(G\)-bundle \(\widetilde E\to B\) one gets a map
\[
  \widetilde E\times S^V \to \widetilde E\times ( F_+\smashp S^V )
\]
and passing to homotopy orbits with respect to the diagonal \(G\)-actions yields
the transfer map \(B_+ \to E_+\) in the stable homotopy category.

This construction of the transfer was generalized in \parencite{mr656721}. The
map \(S^V \to F_+\smashp S^V\) arises from a \emph{duality datum} in
parameterized stable homotopy theory over the base space \(B\).

\begin{definition}
  A \emph{duality datum} in a symmetric monoidal category consists of a pair of
  objects \(X\) and \(\dual{X}\) with morphisms \(\one{} \to[\coev] X\otimes
  \dual{X}\) and \(\dual{X}\otimes X\to[\ev] \one{}\) such that the compositions
  \begin{align*}
    & X \to[\coev\otimes\id] X\otimes \dual{X}\otimes X \to[\id\otimes\ev] X \\
    \shortintertext{and}
    & \dual{X} \to[\id\otimes\coev] \dual{X} \otimes X \otimes \dual{X} \to[\ev\otimes\id] \dual{X}
  \end{align*}
  are identities. In this situation \(\dual{X}\) is said to be a \emph{right dual}
  of \(X\) and \(X\) is said to be a \emph{left dual} of \(\dual{X}\). If \(X\)
  is additionally a right dual of \(\dual{X}\), then \(X\) is said to be
  \emph{strongly dualizable} with dual \(\dual{X}\).

  A duality datum in a symmetric monoidal \infcat \(\cal C\) is a duality datum
  in the homotopy category \(\ho{\cal C}\), see
  \parencite[section~4.6.1]{higheralgebra}.
\end{definition}

\begin{remark}
  In \parencite{arxiv180610108L}, Levine defines a dual \(\dual{X} =
  \map(X,\one{})\) for any object \(X\) in a \emph{closed} symmetric monoidal
  category. Then \(X\) is called strongly dualizable whenever the induced morphism
  \(\dual{X}\otimes X\to\map(X,X)\) is an equivalence. By
  \parencite[Lemma~4.6.1.6]{higheralgebra} this coincides with our definition.
\end{remark}

Dold and Puppe show that, for a fiber bundle \(E\to B\) with fiber a compact
smooth manifold, there is a duality datum in the homotopy category of
\(B\)--parameterized spectra. It exhibits the fiberwise Thom spectrum of the
fiberwise stable normal bundle to \(E\) as a dual of the suspension spectrum of
\(E\). They then show that the transfer in \parencite{MR0377873} is an instance
of the following general construction.

\begin{definition}
  In a symmetric monoidal \infcat \(\cal C\), suppose that an object \(X\) is
  equipped with a map \(\Delta\colon X\to X\otimes C\) for some other object \(C\).
  Furthermore, suppose that \(X\) is strongly dualizable. The \emph{transfer of
    \(X\) with respect to \(\Delta\)} is defined as the composition
  \[
    \transfer[X,\Delta]\colon \one{} \to[\coev] X\otimes\dual{X} \to[\switch]
    \dual{X}\otimes X\to[\id\otimes\Delta] \dual{X}\otimes X\otimes
    C\to[\eval\otimes\id] \one{}\otimes C\wequiv C.
  \]
  If there can be no risk of confusion we write \(\transfer[X] = \transfer[X,\Delta]\).
\end{definition}

In \autoref{sec:ind-schemes} we construct a symmetric monoidal \infcat
\(\SH(B)\) for every smooth ind-scheme \(B\) over a base scheme \(S\). This
enables us to extend the definition of the motivic Becker--Gottlieb transfer in
\parencite{arxiv180610108L}.

\begin{definition}
  For a smooth map \(f\colon E\to B\) between smooth ind-schemes over \(S\) with
  \(E/B\in\SH(B)\) strongly dualizable we define the \emph{relative transfer}
  \(\Tr{f/B}\colon\one{B}\to E/B\) as follows: Applying \(f_\#\) to the diagonal
  \(E\to E\times_B E\) gives a morphism \(\Delta\colon E/B\to E/B\smashp E/B\)
  in \(\SH(B)\) and we set \(\Tr{E/B} = \Tr{f/B} = \transfer[E/B,\Delta]\).

  Additionally, since \(\pi\colon B\to S\) is a smooth ind-scheme, we can define
  the \emph{absolute transfer} of \(f\) as
  \[
    \Tr{f/S} = \pi_\#(\Tr{f/B})\colon E/S\to B/S.
  \]
\end{definition}

\begin{proposition}\label{prop:transfer-props}
  The motivic Becker--Gottlieb transfer enjoys the following properties.
  \begin{enumerate}
  \item The transfer is additive in homotopy pushouts: Suppose \(X\), \(Y\),
    \(U\) and \(V\) are smooth ind-schemes over a smooth ind-scheme \(B\) over
    \(S\). Further suppose that there is a homotopy cocartesian square
    \[
      \begin{tikzcd}
        X/B \ar[r] \ar[d] & U/B \ar[d] \\
        V/B \ar[r] & Y/B
      \end{tikzcd}
    \]
    in \(\SH(B)\). Assume that \(Y/B\), \(U/B\) and \(V/B\) are strongly
    dualizable. Then \(\Tr{Y/B}\) is a sum of the compositions
    \begin{align*}
      & \one{B} \to[{\Tr{U/B}}] U/B \to Y/B \\
      & \one{B} \to[{\Tr{V/B}}] V/B \to Y/B \\
      \shortintertext{and}
      & \one{B} \to [{\Tr{X/B}}] X/B \to Y/B
    \end{align*}
    in \(\SH(B)\).
  \item The relative transfer is compatible with pullback: If \(p\colon B' \to
    B\) and \(f\colon E\to B\) are maps of smooth ind-schemes over \(S\) and
    \(E/B\) is strongly dualizable in \(\SH(B)\) then the pullback
    \(p^*(E/B)\wequiv (E\times_B B')/B'\) is strongly dualizable in \(\SH(B')\)
    and \(\Tr{p^*f\,/B'} \wequiv p^*\Tr{f/B}\).
  \item The absolute transfer is natural in cartesian squares: If
    \[
      \begin{tikzcd}
        E' \ar[r]\ar[d, "f'"'] & E \ar[d, "f"] \\
        B' \ar[r] & B
      \end{tikzcd}
    \]
    is a cartesian square of smooth ind-schemes over \(S\) and the vertical maps are
    smooth, then the square
    \[
      \begin{tikzcd}
        E'/S \ar[r] & E/S \\
        B'/S \ar[r] \ar[u, "{\Tr{f'/S}}"] & B/S \ar[u, "{\Tr{f/S}}"']
      \end{tikzcd}
    \]
    commutes in \(\SH(S)\).
  \end{enumerate}
\end{proposition}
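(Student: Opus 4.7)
The plan is to establish the three parts in order, drawing on two main ingredients: May's additivity theorem for traces in a tensor triangulated category (cited as \textcite{MR1867203}) for part (i), and smooth base change for the six operations, as extended to smooth ind-schemes in \autoref{sec:ind-schemes}, for parts (ii) and (iii).

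For \textbf{part (i)}, I work in the stable \infcat \(\SH(B)\), where the given homotopy cocartesian square is equivalent to a cofiber sequence \(X/B\to U/B\oplus V/B\to Y/B\). The Becker--Gottlieb transfer of each of these objects is, by construction, the categorical trace of the associated diagonal self-map. Because all the diagonals arise by applying \(f_\#\) to scheme-theoretic diagonals of the total spaces over \(B\), they are compatible across the cofiber sequence. May's additivity theorem for traces then expresses the trace of the diagonal on \(Y/B\) as the sum of the traces on \(U/B\), \(V/B\) and \(X/B\), each postcomposed with the natural map into \(Y/B\). Unpacking \(\transfer[-,\Delta]\) as this trace yields the claimed identity.

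For \textbf{part (ii)}, the functor \(p^*\colon\SH(B)\to\SH(B')\) is a symmetric monoidal left adjoint, so it preserves duality data and hence strong dualizability. Smooth base change along the cartesian square with \(f\) smooth provides a canonical equivalence
\[
  p^*(E/B) \;=\; p^*f_\#\one{E} \;\wequiv\; (p^*f)_\#\,p'^*\one{E} \;=\; (E\times_B B')/B',
\]
and under this equivalence the diagonal of \(E\) over \(B\) is carried to the diagonal of \(E\times_B B'\) over \(B'\). Since \(p^*\) also preserves \(\coev\), \(\eval\) and the symmetry, it transports the entire composition defining \(\Tr{f/B}\) to the composition defining \(\Tr{p^*f\,/B'}\).

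For \textbf{part (iii)}, let \(\pi\colon B\to S\) and \(\pi'\colon B'\to S\) be the structure maps. Using the unit and counit of the \((\pi_\#,\pi^*)\) adjunction one builds a natural transformation \(\pi'_\#p^*\Rightarrow\pi_\#\). Evaluated on \(\one{B}\) it recovers the map \(B'/S\to B/S\) induced by \(p\); evaluated on \(f_\#\one{E}\), smooth base change identifies it with the map \(E'/S\to E/S\) induced by the top arrow of the cartesian square. By part (ii) we have \(\Tr{f'/B'}=p^*\Tr{f/B}\), so \(\Tr{f'/S}=\pi'_\#p^*\Tr{f/B}\), and by definition \(\Tr{f/S}=\pi_\#\Tr{f/B}\); naturality of the transformation applied to \(\Tr{f/B}\colon\one{B}\to E/B\) then yields the commutative square.

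The \textbf{main obstacle} is part (i): to apply May's additivity one must check that the diagonals of \(X/B\), \(U/B\), \(V/B\) and \(Y/B\) genuinely assemble into a compatible system over the cofiber sequence, and that the hypotheses of the additivity theorem are available in \(\SH(B)\) for \(B\) a smooth ind-scheme. Parts (ii) and (iii) are then formal consequences of the symmetric monoidality of \(p^*\) together with smooth base change, both of which are supplied by the six-operations formalism extended to smooth ind-schemes.
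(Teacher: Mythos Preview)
Your proposal is correct and follows essentially the same route as the paper: part (i) is reduced to May's additivity theorem via the cofiber sequence \(X/B\to U/B\vee V/B\to Y/B\) and the compatibility of the geometric diagonals (which the paper makes explicit in a diagram, exactly the obstacle you flag); part (ii) is the standard argument from symmetric monoidality of \(p^*\) plus smooth base change, which the paper simply cites from Levine; and part (iii) is proved via the natural transformation \(\pi'_\#p^*\Rightarrow\pi_\#\) built from unit and counit, just as in the paper's \autoref{lem:transfer-natural}.
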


To prove part (i) of \autoref{prop:transfer-props} we appeal to a general
additivity result of May's. In the context of symmetric monoidal triangulated
categories, \parencite{MR1867203} proves that the transfer is additive in
distinguished triangles. However, since duality in symmetric monoidal \infcats
is characterised at the level of homotopy categories, May's theorem admits the
following reformulation.

\begin{theorem}[{\parencite[Theorem~1.9]{MR1867203}}]\label{thm:transfer-additivity}
  Let \(\cal C\) be a symmetric monoidal stable \infcat and let \(X\to Y\to Z\)
  be a cofiber sequence in \(\cal C\). Assume \(C\in\cal C\) is such that
  \(\_{\otimes}C\) preserves cofiber sequences. Suppose that \(Y\) is equipped
  with a map \(\Delta_Y\colon Y\to Y\otimes C\) and that \(X\) and \(Y\) are
  strongly dualizable. Then \(Z\) is strongly dualizable and there are maps
  \(\Delta_X\) and \(\Delta_Z\) such that
  \[
    \begin{tikzcd}
      X \ar[r] \ar[d, "\Delta_X"] & Y \ar[r] \ar[d,"\Delta_Y"] & Z \ar[d,"\Delta_Z"] \\
      X\otimes C \ar[r] & Y\otimes C \ar[r] & Z\otimes C
    \end{tikzcd}
  \]
  commutes. Furthermore, we have \(\transfer[Y,\Delta_Y] = \transfer[X,\Delta_X] +
  \transfer[Z,\Delta_Z]\) in \(\pi_0\map_{\cal C}(\one, C)\).
\end{theorem}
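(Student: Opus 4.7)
The plan is to reduce the statement directly to May's original theorem in the symmetric monoidal triangulated setting by passing to the homotopy category \(\ho{\cal C}\). Since duality, cofiber sequences (as distinguished triangles), and the transfer itself are all phenomena that only see \(\ho{\cal C}\), essentially no additional \(\infty\)–categorical input should be required.

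Concretely, I would first translate every piece of data and every hypothesis into \(\ho{\cal C}\). The sequence \(X \to Y \to Z\) becomes a distinguished triangle in the symmetric monoidal triangulated category \(\ho{\cal C}\). The assumption that \(\_ \otimes C\) preserves cofiber sequences in \(\cal C\) says exactly that its descent to \(\ho{\cal C}\) is a triangulated endofunctor. By \parencite[Lemma~4.6.1.6]{higheralgebra}, already cited in the preceding remark, strong dualizability of \(X\) and \(Y\) in \(\cal C\) is the same as strong dualizability in \(\ho{\cal C}\). Finally, \(\Delta_Y\) descends in particular to a morphism \(Y \to Y \otimes C\) in \(\ho{\cal C}\).

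Having made this translation, I would then apply \parencite[Theorem~1.9]{MR1867203} verbatim to the data in \(\ho{\cal C}\). This produces strong dualizability of \(Z\) in \(\ho{\cal C}\), and hence in \(\cal C\), together with morphisms \(\Delta_X\) and \(\Delta_Z\) in \(\ho{\cal C}\) making the stated square commute in \(\ho{\cal C}\)—which is precisely the content of the diagram commuting in the sense the theorem asks for. I would lift \(\Delta_X\) and \(\Delta_Z\) arbitrarily along the surjections \(\pi_0\map_{\cal C}(-,-)\onto\hom_{\ho{\cal C}}(-,-)\) to genuine morphisms in \(\cal C\). May's additivity relation then gives \(\transfer[Y,\Delta_Y] = \transfer[X,\Delta_X] + \transfer[Z,\Delta_Z]\) in \(\hom_{\ho{\cal C}}(\one{}, C) = \pi_0\map_{\cal C}(\one{}, C)\), which is exactly the claim.

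The only obstacle, such as it is, is conceptual rather than technical: one has to be comfortable that the \(\infty\)–categorical statement does not demand any higher coherence of the square beyond what May establishes at the level of \(\ho{\cal C}\). Inspecting the construction, the transfer is computed from the duality datum, the switch, and \(\Delta\) using operations that only see \(\ho{\cal C}\), so the additivity equation really lives in \(\pi_0\map_{\cal C}(\one{}, C)\). This is what makes the reformulation essentially formal and lets May's triangulated theorem be invoked without any intervention.
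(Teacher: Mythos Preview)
Your proposal is correct and matches the paper's approach exactly: the paper does not give a separate proof but simply presents the statement as a reformulation of May's triangulated-category theorem, justified by the remark that duality in a symmetric monoidal \(\infty\)-category is detected at the level of the homotopy category. Your write-up just makes this reduction explicit; the one cosmetic slip is that \(\pi_0\map_{\cal C}(-,-)\to\hom_{\ho{\cal C}}(-,-)\) is a bijection, not merely a surjection, so no genuine lifting is needed.
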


\begin{proof}[Proof of \autoref{prop:transfer-props}, (i)]
  The homotopy cocartesian square induces a cofiber sequence
  \[
    U/B \vee V/B \to Y/B \to S^1 \smashp X/B
  \]
  in \(\SH(B)\). Shifting this sequence yields and introducing diagonal maps
  gives a diagram
  \[
    \begin{tikzcd}
      X/B \ar[r] \ar[d, "\Delta_X"] & U/B \vee V/B \ar[r] \ar[d,
      "\Delta_U\vee\Delta_V"] & Y/B \ar[dd, "\Delta_Y"] \\
      X/B\smashp X/B \ar[d, "(1)"] & (U/B\smashp U/B) \vee (V/B\smashp V/B)
      \ar[d, "(2)"] & {}\\
      X/B\smashp Y/B \ar[r] & (U/B\vee V/B)\smashp Y/B \ar[r] & Y/B\smashp Y/B
    \end{tikzcd}
  \]
  in which the outer two rows are cofiber sequences and the maps \((1)\) and
  \((2)\) are induced from the maps \(X/B\to Y/B\), \(U/B\to Y/B\) and \(V/B\to
  Y/B\) respectively. Then we can conclude using \autoref{thm:transfer-additivity}.
\end{proof}

Part (ii) of \autoref{prop:transfer-props} is proven in
\parencite[Lemma~1.6]{arxiv180610108L}. We formulate the proof of part (iii) as
a lemma.

\begin{lemma}\label{lem:transfer-natural}
  Let \(S\) be a scheme and \(B\) and \(B'\) smooth ind-schemes over \(S\).
  Suppose that \(f\colon E\to B\) is smooth with \(E/B\in\SH(B)\) strongly
  dualizable and
  \[
    \begin{tikzcd}
      E' \ar[r, "i'"]\ar[d, "f'"] & E \ar[d, "f"] \\
      B' \ar[r, "i"'] & B
    \end{tikzcd}
  \]
  is cartesian. Then the square
  \[
    \begin{tikzcd}
      E'/S \ar[r, "i'/S"] & E/S \\
      B'/S \ar[r, "i/S"'] \ar[u, "{\Tr{f'/S}}"] & B/S \ar[u, "{\Tr{f/S}}"']
    \end{tikzcd}
  \]
  is homotopy commutative.
\end{lemma}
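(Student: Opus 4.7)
The plan is to leverage part (ii) of \autoref{prop:transfer-props} and push it forward along the smooth structure maps. By definition \(\Tr{f/S} = \pi_\#(\Tr{f/B})\) and \(\Tr{f'/S} = \pi'_\#(\Tr{f'/B'})\), where \(\pi\colon B\to S\) and \(\pi'\colon B'\to S\) are the structure maps and \(\pi' = \pi\circ i\). Part (ii) yields an equivalence \(\Tr{f'/B'}\wequiv i^*\Tr{f/B}\) in \(\SH(B')\), under the smooth base change identification \(i^*f_\#\one{E}\wequiv f'_\#\one{E'}\).

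To convert this into the desired statement in \(\SH(S)\), I would introduce the natural transformation \(\tau\colon\pi'_\# i^*\Rightarrow\pi_\#\) of functors \(\SH(B)\to\SH(S)\) whose component \(\tau_X\) is adjoint under \((\pi'_\#,\pi'^*)\) to the composite \(i^*X\to i^*\pi^*\pi_\# X = \pi'^*\pi_\# X\), where the first map is \(i^*\) applied to the unit of the \((\pi_\#,\pi^*)\)-adjunction. Naturality of \(\tau\) at the morphism \(\Tr{f/B}\colon\one{B}\to f_\#\one{E}\) then produces a commutative square in \(\SH(S)\):
\[
  \begin{tikzcd}
    \pi'_\#\one{B'} \ar[r, "{\pi'_\#(i^*\Tr{f/B})}"] \ar[d, "{\tau_{\one{B}}}"'] & \pi'_\# i^* f_\#\one{E} \ar[d, "{\tau_{f_\#\one{E}}}"] \\
    \pi_\#\one{B} \ar[r, "{\pi_\#\Tr{f/B}}"'] & \pi_\# f_\#\one{E}
  \end{tikzcd}
\]
in which the top arrow coincides with \(\Tr{f'/S}\) (using the first step together with smooth base change) and the bottom arrow coincides with \(\Tr{f/S}\) by definition.

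It remains to identify the vertical arrows with \(i/S\) and \(i'/S\) respectively, the latter after identifying the top right vertex with \(E'/S\) via smooth base change. For \(X=\one{B}\) this is immediate: the unit \(\eta_{\one{B}}\colon\one{B}\to\pi^*(B/S)\wequiv(B\times_S B)/B\) is the diagonal section, its pullback along \(i\) is the graph of \(i\) in \(B\times_S B'\), and this corresponds under the \((\pi'_\#,\pi'^*)\)-adjunction to the canonical map \(i/S\). The analogous identification for \(\tau_{f_\#\one{E}}\) is the main obstacle: it requires verifying that \(\tau\), after passing through the smooth base change equivalence \(i^*f_\#\wequiv f'_\#(i')^*\), agrees with the natural transformation of the same shape attached to the smooth structure maps \(\pi f\colon E\to S\) and \(\pi' f'\colon E'\to S\) applied to \(\one{E}\). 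This amounts to a coherence check in the six-functor formalism, combining the naturality of the smooth base change equivalence with the units of the relevant adjunctions.
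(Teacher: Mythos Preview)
Your argument is essentially the paper's own proof. The paper writes the same natural transformation \(\tau\colon \pi'_\# i^{*}\Rightarrow \pi_\#\) explicitly as the composite \(\pi'_\# i^{*}\xrightarrow{\unit}\pi'_\# i^{*}\pi^{*}\pi_\#\wequiv \pi'_\#\pi'^{*}\pi_\#\xrightarrow{\counit}\pi_\#\), which is exactly the adjoint description you gave; it then displays the three-row diagram obtained by stacking your naturality square under the upper square coming from part~(ii) (the exchange \(\Ex_\#^{*}\)), and finishes with the sentence ``chasing through the definition of \(p_\# i^{*}\to q_\#\) shows that the leftmost composite vertical map is \(i/S\) and that the rightmost vertical map is \(i'/S\)''---precisely the coherence check you single out as the main obstacle.
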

\begin{proof}
  Write \(p\colon B'\to S\) and \(q\colon B\to S\) for the structure morphisms.
  There is a natural transformation \(p_\#i^*\to q_\#\) defined as the
  composition
  \[
    p_\#i^* \to[\unit] p_\#i^*q^*q_\# \wequiv p_\# p^*q_\# \to[\counit] q_\#.
  \]
  Consequently, we obtain a homotopy commutative diagram
  \[
    \begin{tikzcd}[column sep=5em]
      \mathllap{B'/S = {}}p_\#p^*\one{S} \ar[r, "p_\#\Tr{f'/B'}"] \ar[d, equal] & p_\#f'_\#
      {i'}^*f^*q^*\one{S}\mathrlap{{} = E'/S}\ar[d, "\Ex_\#^*"] \\
      p_\#i^*q^*\one{S} \ar[r, "p_\#i^*\Tr{f/B}"] \ar[d] &
      p_\#i^*f_\#f^*q^*\one{S} \ar[d] \\
      \mathllap{B/S = {}}q_\# q^*\one{S} \ar[r, "q_\#\Tr{f/B}"'] & q_\#
      f_\#f^*q^*\one{S}\mathrlap{{} = E/S.}
    \end{tikzcd}
  \]
  Chasing through the definition of \(p_\#i^*\to q_\#\) shows that the leftmost
  composite vertical map is \(i/S\) and that the rightmost vertical map is \(i'/S\).
\end{proof}

Finally, we will need some tools to understand when a smooth morphism \(E\to B\)
of smooth ind-schemes over \(S\) determines a strongly dualizable object \(E/B\)
in \(\SH(B)\). We have the following formulation of motivic Atiyah duality.
\begin{theorem}[{see \parencite{voevodsky-cross-functors}, \parencite{MR2135324}, \parencite{ayoubi}, \parencite{2009arXiv0912.2110C}}]\label{thm:atiyah-duality}
  If \(Y\to X\) is a smooth and proper morphism of schemes, then
  \(Y/X\in\SH(X)\) is strongly dualizable.
\end{theorem}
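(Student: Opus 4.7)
The plan is to leverage the adjoint triple $(f_\#, f^*, f_*)$ available for any smooth morphism $f\colon Y \to X$. Combining the smooth projection formula $f_\#(M\otimes f^*A) \wequiv f_\#M\otimes A$ with the $(f^*, f_*)$--adjunction yields, for any $A, B \in \SH(X)$, the natural chain
\[
  \map_{\SH(X)}(f_\#\one{Y}\otimes A, B) \wequiv \map_{\SH(Y)}(f^*A, f^*B) \wequiv \map_{\SH(X)}(A, f_*f^*B),
\]
so that the internal mapping object out of $Y/X = f_\#\one{Y}$ is computed by $f_*f^*(\_)$. To exhibit $Y/X$ as strongly dualizable it then suffices to rewrite this functor as tensor product with a single fixed object.

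Next I would invoke properness of $f$, which supplies the proper projection formula $f_*(M\otimes f^*B) \wequiv f_*M\otimes B$. Taking $M = \one{Y}$ rewrites $f_*f^*B \wequiv f_*\one{Y}\otimes B$, so the previous chain becomes
\[
  \map_{\SH(X)}(f_\#\one{Y}\otimes A, B) \wequiv \map_{\SH(X)}(A, f_*\one{Y}\otimes B),
\]
naturally in $A$ and $B$. By the standard characterization of strong dualizability in a closed symmetric monoidal \infcat (as recalled in the remark following the definition of duality datum), this exhibits $Y/X$ as strongly dualizable with dual $f_*\one{Y}$. As a geometric sanity check, combining purity $f_\# \wequiv f_!\circ\Sigma^{T_f}$ for smooth morphisms with $f_! \wequiv f_*$ for proper morphisms identifies $f_*\one{Y}$ with the Thom spectrum $f_\#\Sigma^{-T_f}\one{Y}$ of the stable normal bundle, recovering the familiar geometric form of motivic Atiyah duality.

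The hard part is not the formal manipulation above but the input. The proper projection formula in $\SH$ is a genuinely non-trivial consequence of the full six-functor formalism, as is the purity equivalence used in the sanity check; establishing both in the motivic setting is the technical content of the references cited in the statement, to which we simply appeal.
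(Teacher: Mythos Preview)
The paper does not give a proof of this theorem; it is stated with citations to the six-functor literature and used as a black box. Your sketch is precisely the standard derivation of Atiyah duality from that machinery, and it is essentially correct: the smooth projection formula identifies \(\map(Y/X,-)\) with \(f_*f^*\), and the proper projection formula identifies the latter with \(f_*\one{Y}\otimes(-)\).

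One point deserves a word of care. The remark you invoke characterizes strong dualizability by asking that the \emph{canonical} comparison \(\dual{(Y/X)}\otimes B\to\map(Y/X,B)\) be an equivalence, whereas what your chain produces is \emph{some} natural equivalence \(\map_{\SH(X)}((Y/X)\otimes A,B)\wequiv\map_{\SH(X)}(A,f_*\one{Y}\otimes B)\). A bare adjunction between \((Y/X)\otimes(-)\) and \(D\otimes(-)\) does not by itself force that canonical map to be an equivalence. What closes the gap is that both projection-formula equivalences are \(\SH(X)\)-linear: they identify \(f_\#f^*\wequiv (Y/X)\otimes(-)\) and \(f_*f^*\wequiv f_*\one{Y}\otimes(-)\) as \(\SH(X)\)-module endofunctors, so the adjunction \(f_\#f^*\dashv f_*f^*\) is an adjunction of \(\SH(X)\)-module functors; unwinding the unit and counit at \(\one{X}\) then literally gives a duality datum. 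This is routine once noted, so your outline stands, and your closing remark is exactly right: the substantive content is the proper projection formula (and purity, for the Thom-spectrum identification of the dual), which is what the cited references supply.
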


Because the property of being strongly dualizable is formulated in the homotopy
category, it is immediate that any smooth scheme \(Y\to X\) such that \(Y/X\) is
\(\AA^1\)-homotopy equivalent to a smooth and proper scheme over \(X\) defines
a strongly dualizable object in \(\SH(X)\). Furthermore, dualizability is local
in the following sense.

\begin{theorem}[{\parencite[Proposition~1.2, Theorem~1.10]{arxiv180610108L}}]\label{thm:local-dualizability}
  Let \(B\) be a scheme over \(S\). Suppose that \(E\in\SH(B)\) and there is a
  finite Nisnevich covering family \(\{j_i\colon U_i\to B\}\) and that \(j_i^*E
  \in\SH(U_i)\) is strongly dualizable. Then \(E\) is strongly dualizable as well.

  If \(E\to B\) is a Nisnevich--locally trivial fiber bundle with smooth fiber
  \(F\) and \(B\) is smooth over \(S\), then \(E/B\) is strongly dualizable in
  \(\SH(B)\) if \(F/S\) is strongly dualizable in \(\SH(S)\)
\end{theorem}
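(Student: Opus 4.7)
The plan is to reduce both assertions to a local-to-global check in $\SH(B)$, exploiting Nisnevich descent for $\SH$ together with the good behaviour of the internal Hom under smooth pullback. The second assertion follows quickly from the first: choose a finite Nisnevich cover $\{j_i\colon U_i \to B\}$ trivializing $E \to B$, so that $E \times_B U_i \simeq F \times_S U_i$. Then $j_i^*(E/B) \simeq p_i^*(F/S)$, where $p_i\colon U_i \to S$ is the structure map. Since $p_i^*$ is symmetric monoidal it preserves strongly dualizable objects, so each $j_i^*(E/B)$ is strongly dualizable in $\SH(U_i)$, and the first assertion concludes.

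For the first assertion, I would take $D := \map(E, \one_B)$ as the candidate dual. By the criterion recalled in the remark following the definition of duality datum, $E$ is strongly dualizable precisely when the natural comparison $\mu_E\colon D \otimes E \to \map(E, E)$ is an equivalence in $\SH(B)$. Because $\SH$ satisfies Nisnevich descent on schemes, being an equivalence is a Nisnevich-local property on $B$, so it suffices to show that $j_i^* \mu_E$ is an equivalence in $\SH(U_i)$ for every $i$.

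The key technical input is that for any smooth morphism $j\colon U \to B$ of schemes, the exchange transformation $j^* \map(X, Y) \to \map(j^* X, j^* Y)$ is an equivalence in $\SH(U)$. This is the adjoint incarnation of the smooth projection formula $j_\#(j^* A \otimes Z) \simeq A \otimes j_\# Z$: combined with the adjunction $j_\# \dashv j^*$, both sides represent the same functor $Z \mapsto \map_{\SH(B)}(j_\# Z \otimes X, Y)$ on $\SH(U)$. Applied to each smooth Nisnevich map $j_i$, this identifies $j_i^* \mu_E$ with $\mu_{j_i^* E}$, and the latter is an equivalence by the strong dualizability hypothesis on $j_i^* E$.

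The main obstacle is really just this compatibility of $j^*$ with the internal Hom along smooth maps; once it is in hand from the six-functor formalism for $\SH$, the descent step is routine. A minor bookkeeping point is checking that $\mu_E$ is sufficiently natural for its pullback under $j_i^*$ to coincide with $\mu_{j_i^* E}$, but this is immediate from its construction via coevaluation and evaluation.
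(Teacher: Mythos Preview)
The paper does not give its own proof of this statement; it is quoted verbatim from Levine's paper (Proposition~1.2 and Theorem~1.10 there), so there is no in-paper argument to compare against. Your proposal is correct and is essentially the standard argument one would expect in the cited source: the second claim reduces to the first via a trivializing cover and the fact that the monoidal functor \(p_i^*\) preserves dualizables, and the first claim follows because the comparison map \(\dual{E}\otimes E\to\Hom(E,E)\) can be tested Nisnevich-locally (conservativity of \(\{j_i^*\}\) from Nisnevich descent for \(\SH\)) and \(j_i^*\) commutes with the internal Hom by the smooth projection formula, since the \(j_i\) are \'etale and in particular smooth.
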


\section{Transfers of Grassmannians}

\begin{definition}
  The ind-scheme \(\Gr{r}\) is the sequential colimit of the Grassmannians
  \(\Gr[n]{r}\) of \(r\)--planes in \(n\)--space along the canonical closed immersions
  \(\Gr[n]{r}\clim\Gr[n+1]{r}\).
\end{definition}

It is well known that \(\Gr{r}\) is a model for \(\BGL_r\) in the
\(\AA^1\)--homotopy category. In fact, let \(U_r(N)\) be the scheme of
monomorphisms \(\cal O^r \to \cal O^N\). Along \(\cal O^N\oplus 0 \subset \cal
O^{N+1}\), there are closed embeddings \(U_r(N)\clim U_r(N+1)\) and
\parencite[Proposition~4.3.7]{mv} shows that the colimit \(U_r(\infty) =
\colim_N U_r(N)\) along these embeddings is contractible in \(\H(S)\). Also, the
quotient \(U_r(N)/\GL_r\) is isomorphic to \(\Gr[N]{r}\) and consequently
\(U_r(\infty)/\GL_r\isom \Gr{r}\) is a model for \(\BGL_r\).

Direct sum defines a morphism \(U_r(N)\times U_{n-r}(N)\to U_n(2N)\) which is
equivariant with respect to the block diagonal inclusion
\(\GL_r\times\GL_{n-r}\to \GL_n\). Passing to the colimit \(N\cto\infty\) and
taking quotients yields a morphism
\[
  i_{r,n}\colon\Gr{r}\times\Gr{n-r} \to \Gr{n}.
\]
This morphism is equivalent in \(\H(S)\) to the map
\(\BGL_r\times\BGL_{n-r}\to\BGL_n\) induced by the block diagonal inclusion
\(\GL_r\times\GL_{n-r}\subset \GL_n\). The goal of this section is to develop a
partial inductive description of the absolute transfer \(\transfer[n,r]\colon
\Gr{n,+}\to\Gr{r,+}\smashp\Gr{n-r,+}\) of \(i_{r,n}\) in \(\SH(S)\). For this
purpose a different version of \(i_{r,n}\) in \(\H(S)\) will be more convenient.

\begin{lemma}
  In \(\H(S)\) there is an equivalence \(\Gr{r}\times\Gr{n-r}\to
  U_n(\infty)/(\GL_r\times\GL_{n-r})\). Along this equivalence, \(i_{r,n}\)
  corresponds to the quotient
  \[
    \overline{i_{r,n}}\colon U_n(\infty)/(\GL_r\times\GL_{n-r}) \to U_n(\infty)/\GL_n\isom \Gr{n}
  \]
  by \(\GL_n\).
\end{lemma}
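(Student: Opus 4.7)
My plan is to construct the equivalence as the quotient of the direct-sum morphism $U_r(\infty)\times U_{n-r}(\infty)\to U_n(\infty)$, which is $(\GL_r\times\GL_{n-r})$-equivariant for the block-diagonal inclusion $\GL_r\times\GL_{n-r}\hookrightarrow\GL_n$. Since the action on $U_n(\infty)$ is obtained by restricting the $\GL_n$-action, passing to the quotient by $\GL_r\times\GL_{n-r}$ on both sides produces the map
\[
  \Gr{r}\times\Gr{n-r}\isom (U_r(\infty)\times U_{n-r}(\infty))/(\GL_r\times\GL_{n-r}) \to U_n(\infty)/(\GL_r\times\GL_{n-r}),
\]
and composing further with the residual quotient by $\GL_n/(\GL_r\times\GL_{n-r})$ gives $i_{r,n}$ tautologically from its definition recalled just before the lemma.

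To show this map is an $\AA^1$-equivalence, I would recognize both source and target as models for $B(\GL_r\times\GL_{n-r})$ in $\H(S)$. For the source this was already noted: $U_r(\infty)\times U_{n-r}(\infty)\to\Gr{r}\times\Gr{n-r}$ is a Zariski-locally trivial principal $(\GL_r\times\GL_{n-r})$-bundle whose total space is $\AA^1$-contractible by \parencite[Proposition~4.3.7]{mv}. For the target I would check the analogous statement: the total space $U_n(\infty)$ is $\AA^1$-contractible by the same reference, and the projection $U_n(\infty)\to U_n(\infty)/(\GL_r\times\GL_{n-r})$ is a Zariski-locally trivial principal $(\GL_r\times\GL_{n-r})$-bundle because $\GL_n$ is a special group, so that $\GL_n\to\GL_n/(\GL_r\times\GL_{n-r})$ is Zariski-locally trivial and any Zariski trivialization of $U_n(\infty)\to\Gr{n}$ descends to one of the intermediate quotient.

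With both sides identified as classifying spaces, I would conclude by comparing the Čech nerves of the two principal bundles. In simplicial degree $k$ the induced map is
\[
  (U_r(\infty)\times U_{n-r}(\infty))\times(\GL_r\times\GL_{n-r})^k \to U_n(\infty)\times(\GL_r\times\GL_{n-r})^k,
\]
i.e., the direct-sum map times the identity, which is an $\AA^1$-equivalence in every simplicial degree because both $U_r(\infty)\times U_{n-r}(\infty)$ and $U_n(\infty)$ are $\AA^1$-contractible. Passing to the colimits, which compute the quotients in $\H(S)$ thanks to Zariski-local triviality, yields the desired equivalence. The only step requiring actual geometric input rather than formal nonsense is the verification of Zariski-local triviality for $U_n(\infty)\to U_n(\infty)/(\GL_r\times\GL_{n-r})$; I expect this to be the main—but ultimately minor—obstacle, as it reduces to the well-known special-group property of $\GL_n$.
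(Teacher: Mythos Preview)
Your proposal is correct and follows the same underlying idea as the paper: both \(\Gr{r}\times\Gr{n-r}\) and \(U_n(\infty)/(\GL_r\times\GL_{n-r})\) are quotients of \(\AA^1\)-contractible ind-schemes by a free \((\GL_r\times\GL_{n-r})\)-action, hence both model \(B(\GL_r\times\GL_{n-r})\), and the direct-sum map is compatible with these presentations. The paper's proof is much terser---it simply records the finite-level commutative diagram
\[
  \begin{tikzcd}
    U_r(N)\times U_{n-r}(N) \ar[r, "\varphi"]\ar[d] & U_n(2N)\ar[d] \\
    \Gr[N]{r}\times\Gr[N]{n-r} \ar[r]\ar[dr, "i_{r,n}"'] & U_n(2N)/(\GL_r\times\GL_{n-r})\ar[d] \\
    & \Gr[2N]{n}
  \end{tikzcd}
\]
and asserts that the horizontal maps become equivalences as \(N\to\infty\); your \v{C}ech-nerve comparison makes explicit the mechanism behind this assertion.

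One small correction: the Zariski-local triviality of \(\GL_n\to\GL_n/(\GL_r\times\GL_{n-r})\), and more directly of the \((\GL_r\times\GL_{n-r})\)-torsor \(U_n(\infty)\to U_n(\infty)/(\GL_r\times\GL_{n-r})\), follows from \(\GL_r\times\GL_{n-r}\) being special (as a product of special groups), not from \(\GL_n\) being special. This does not affect the validity of your argument.
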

\begin{proof}
  Writing \(\varphi\colon U_r(N)\times U_{n-r}(N)\to U_n(2N)\) for the map
  induced by taking direct sums, we obtain a commutative diagram
  \[
    \begin{tikzcd}
      U_r(N)\times U_{n-r}(N) \ar[r, "\varphi"]\ar[d] & U_n(2N)\ar[d] \\
      \Gr[N]{r}\times\Gr[N]{n-r} \ar[r]\ar[dr, "i_{r,n}"'] & U_n(2N)/(\GL_r\times\GL_{n-r})\ar[d] \\
      & \Gr[2N]{n}.
    \end{tikzcd}
  \]
  Passing to the colimit \(N\cto\infty\) the horizontal maps become equivalences.
\end{proof}

\begin{lemma}\label{lem:gr-fiber-bundle}
  The morphism \(\overline{i_{r,n}}\) is a Zariski--locally trivial bundle over
  \(\Gr{n}\). Its fiber is the quotient \(\GL_n/(\GL_r\times\GL_{n-r})\).
\end{lemma}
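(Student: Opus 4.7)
The plan is to realize $\overline{i_{r,n}}$ as the fiber bundle associated to the principal $\GL_n$-bundle $U_n(\infty)\to\Gr{n}$, which is itself Zariski--locally trivial. The classical input is that, for every $N$, the map $U_n(N)\to\Gr[N]{n}$ is a Zariski--locally trivial principal $\GL_n$-bundle. Explicitly, for each subset $I\subset\{1,\dots,N\}$ of cardinality $n$, let $W_I^N\subset\Gr[N]{n}$ be the open subscheme parametrizing rank $n$ subbundles of $\cal O^N$ whose projection onto the coordinates indexed by $I$ is an isomorphism. Inverting this projection produces a canonical section $W_I^N\to U_n(N)$, hence a trivialization $U_n(N)|_{W_I^N}\isom\GL_n\times W_I^N$; and the $W_I^N$ for $I\subset\{1,\dots,N\}$ of cardinality $n$ cover $\Gr[N]{n}$.

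These trivializations are compatible with the closed immersions $\Gr[N]{n}\clim\Gr[N+1]{n}$, so upon passing to $N\cto\infty$ the sequential colimits $W_I\subset\Gr{n}$ form an open cover of the ind-scheme $\Gr{n}$, and the trivializations assemble into equivalences $U_n(\infty)|_{W_I}\isom\GL_n\times W_I$. Quotienting both sides by the right action of $\GL_r\times\GL_{n-r}\subset\GL_n$ yields equivalences $\overline{i_{r,n}}^{-1}(W_I)\isom\bigl(\GL_n/(\GL_r\times\GL_{n-r})\bigr)\times W_I$, identifying $\overline{i_{r,n}}|_{W_I}$ with the projection onto the second factor. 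This is precisely the Zariski--local triviality with fiber $\GL_n/(\GL_r\times\GL_{n-r})$.

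The main bookkeeping is at the ind-scheme level: one must know that $U_n(\infty)/(\GL_r\times\GL_{n-r})$ is computed as the formal colimit of the scheme-theoretic quotients $U_n(N)/(\GL_r\times\GL_{n-r})$, and that forming this quotient is compatible with restriction to the opens $W_I$. Both reduce, via the trivializations above, to the corresponding assertion for the free right action of $\GL_r\times\GL_{n-r}$ on $\GL_n$, which is standard.
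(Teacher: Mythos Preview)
Your proof is correct and follows essentially the same approach as the paper: both reduce to the fact that each finite-level map \(U_n(N)/(\GL_r\times\GL_{n-r})\to\Gr[N]{n}\) is Zariski--locally trivial with the stated fiber (being the associated bundle of the principal \(\GL_n\)-bundle \(U_n(N)\to\Gr[N]{n}\)), and then pass to the colimit. The paper's argument is a two-line sketch of this, whereas you have spelled out the standard trivializing opens and checked their compatibility in \(N\), which makes the passage to the ind-scheme level more explicit.
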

\begin{proof}
  By construction, the morphism \(\overline{i_{r,n}}\) is isomorphic to the
  colimit of the quotient maps \(U_n(N)/(\GL_r\times\GL_{n-r}) \to
  U_n(N)/\GL_n\isom \Gr[N]{n}.\) But these are all Zariski--locally trivial with
  fiber \(\GL_n/(\GL_r\times\GL_{n-r})\).
\end{proof}

We note that \(\GL_n/(\GL_r\times\GL_{n-r})\) is equivalent to \(\Gr[n]{r}\) in
\(\H(S)\) and this equivalence is compatible with the respective \(\GL_n\)
actions. This is shown in \parencite[Lemma~3.1.5]{MR3748687} and
implies in particular that the image in \(\SH(\Gr{n})\) of the associated
bundle \(U_n(\infty)\times^{\GL_n}\Gr[n]{r}\to\Gr{n}\) is equivalent to that of the
quotient \(U_n(\infty)/(\GL_r\times\GL_{n-r})\to\Gr{n}\).

\begin{lemma}
  The morphism \(\overline{i_{r,n}}\colon U_n(\infty)/(\GL_r\times\GL_{n-r})
  \to\Gr{n}\) defines a strongly dualizable object
  \(G_{r,n}\in\SH(\Gr{n})\).
\end{lemma}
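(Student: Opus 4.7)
The plan is to combine \autoref{lem:gr-fiber-bundle}, motivic Atiyah duality (\autoref{thm:atiyah-duality}) and the local criterion for dualizability (\autoref{thm:local-dualizability}), after first reducing the question from the ind-scheme base \(\Gr{n}\) to its finite truncations \(\Gr[N]{n}\).

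The first step is to identify the fiber of the bundle. By \autoref{lem:gr-fiber-bundle}, \(\overline{i_{r,n}}\) is Zariski-locally trivial with fiber \(\GL_n/(\GL_r\times\GL_{n-r})\), and by the remark following that lemma this fiber is \(\AA^1\)-equivalent to the finite Grassmannian \(\Gr[n]{r}\). Since \(\Gr[n]{r}\) is smooth and proper over \(S\), \autoref{thm:atiyah-duality} gives that \(\Gr[n]{r}/S\) is strongly dualizable in \(\SH(S)\). Because strong dualizability is a property of the homotopy category, the same holds for \((\GL_n/(\GL_r\times\GL_{n-r}))/S\).

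The second step is to establish dualizability stage by stage. For each \(N\), the pullback of \(\overline{i_{r,n}}\) along \(\Gr[N]{n}\to\Gr{n}\) is, by the proof of \autoref{lem:gr-fiber-bundle}, the Zariski-locally trivial bundle \(U_n(N)/(\GL_r\times\GL_{n-r})\to\Gr[N]{n}\) with the same fiber. Applying \autoref{thm:local-dualizability} with base \(B=\Gr[N]{n}\) and fiber \(F = \GL_n/(\GL_r\times\GL_{n-r})\) shows that each of these restrictions is strongly dualizable in \(\SH(\Gr[N]{n})\).

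The final and most delicate step is to transport this stage-wise information to \(\SH(\Gr{n})\). This relies on the construction of \(\SH\) for smooth ind-schemes carried out in \autoref{sec:ind-schemes}; the expected behavior is that \(\SH(\Gr{n})\) is built from the \(\SH(\Gr[N]{n})\) along \(*\)-pullback in such a way that the restriction functors jointly reflect strong dualizability. The main obstacle is therefore not dualizability on each finite stage but the coherence needed to glue the stage-wise duality data. Concretely, one has to verify that the transition squares
\[
  \begin{tikzcd}
    U_n(N)/(\GL_r\times\GL_{n-r}) \ar[r]\ar[d] & U_n(N+1)/(\GL_r\times\GL_{n-r}) \ar[d] \\
    \Gr[N]{n} \ar[r] & \Gr[N+1]{n}
  \end{tikzcd}
\]
are cartesian, so that compatibility of the evaluation and coevaluation data follows from the naturality of \autoref{thm:local-dualizability} (and, via \autoref{thm:local-dualizability}, of \autoref{thm:atiyah-duality}) under smooth pullback. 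Cartesianness here is immediate from the definitions of \(U_n(N)\) and the quotients by \(\GL_n\) and \(\GL_r\times\GL_{n-r}\), so once the formalism of \autoref{sec:ind-schemes} is in place, the duality data assemble to a strong dual of \(G_{r,n}\) in \(\SH(\Gr{n})\).
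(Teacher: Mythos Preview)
Your proposal is correct and follows the same three-step outline as the paper: identify the fiber and its dualizability via Atiyah duality, apply the local criterion over each finite Grassmannian \(\Gr[N]{n}\), and then pass to the ind-scheme base.

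The only point worth flagging is your treatment of the final step. You present the passage from the \(\SH(\Gr[N]{n})\) to \(\SH(\Gr{n})\) as a delicate gluing problem requiring explicit assembly of coevaluation and evaluation data and verification of cartesian transition squares. In the paper this step is a one-line citation of \autoref{lem:ind-dualizability}: since \(\SH(\Gr{n})\wequiv\holim_N\SH(\Gr[N]{n})\) as symmetric monoidal \infcats, \parencite[Proposition~4.6.1.11]{higheralgebra} says an object is strongly dualizable if and only if all its restrictions are. No coherence argument is needed beyond knowing that the limit is taken in symmetric monoidal \infcats, and the cartesianness of your transition squares is only used implicitly to identify the restriction of \(G_{r,n}\) with the expected bundle over \(\Gr[N]{n}\). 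So your instinct that the ind-scheme formalism of \autoref{sec:ind-schemes} does the work is right; you just do not need to reassemble the duality datum by hand.
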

\begin{proof}
  By \autoref{lem:ind-dualizability} it will be enough to show that the
  pullback \(i\colon E\to \Gr[N]{n}\) of \(\overline{i_{r,n}}\) along the inclusion
  \(\Gr[N]{n}\to\Gr{n}\) defines a dualizable object in \(\SH(\Gr[N]{n})\) for
  all \(N\). But, by \autoref{lem:gr-fiber-bundle} the morphism \(i\) is a
  Zariski-locally trivial fiber bundle over \(\Gr[N]{n}\) with fiber
  \(X = \GL_n/(\GL_r\times\GL_{n-r})\). Hence, to show that \(i\) defines a strongly
  dualizable object in \(\SH(\Gr[N]{n})\), by \autoref{thm:local-dualizability}
  it is enough to show that \(X/S\in\SH(S)\) is strongly dualizable.

  But we have seen that \(X\wequiv\Gr[n]{r}\) in \(\H(S)\) and therefore also in
  \(\SH(S)\). The scheme \(\Gr[n]{r}\) is smooth and proper over \(S\), so
  motivic Atiyah duality, \autoref{thm:atiyah-duality}, implies that
  \(\Gr[n]{r}/S\) and therefore also \(X/S\) is strongly dualizable in
  \(\SH(S)\), see for example \parencite[Proposition~1.2]{arxiv180610108L}.
\end{proof}

\begin{lemma}\label{lem:grassmann-decomp}
  Suppose \(r<n\). The open complement of the closed immersion
  \(\Gr[n-1]{r}\clim \Gr[n]{r}\) is the total space of an affine space bundle of
  rank \(n-r\) over \(\Gr[n-1]{r-1}\).

  Dually, the complement of the closed immersion \(\Gr[n-1]{r-1}\clim
  \Gr[n]{r}\) is the total space of an affine space bundle of rank \(r\) over
  \(\Gr[n-1]{r}\).
\end{lemma}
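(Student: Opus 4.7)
The plan is to identify each open complement explicitly via a moduli-theoretic description of the Grassmannian, then recognize the projection to the smaller Grassmannian as an affine bundle of the claimed rank.

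For the first statement, the closed immersion $\Gr[n-1]{r}\into\Gr[n]{r}$ is induced by the linear inclusion of $\AA^{n-1}$ as the hyperplane $\{x_n=0\}$ in $\AA^n$; its image parameterizes $r$-planes $V\subset\AA^n$ contained in $\AA^{n-1}$. The open complement $U$ parameterizes those $r$-planes $V$ whose projection onto the last coordinate is surjective, and for each such $V$ the intersection $V\cap\AA^{n-1}$ is an $(r-1)$-plane, yielding a morphism $\pi\colon U\to\Gr[n-1]{r-1}$ by $V\mapsto V\cap\AA^{n-1}$. Working relatively over $\Gr[n-1]{r-1}$, let $\cal S\subset\cal O^{n-1}\subset\cal O^n$ be the tautological subbundle of rank $r-1$. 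The projection $\cal O^n\to\cal O$ onto the last coordinate descends to a surjection $\cal O^n/\cal S\to\cal O$, and the preimage of the section $1$ is an affine subbundle $\cal A\subset\cal O^n/\cal S$ of rank $n-r$. I would then identify $\cal A$ with $U$ via the morphism $(W,[v])\mapsto W+\AA\cdot v$, whose inverse sends $V$ to $(V\cap\AA^{n-1},[v])$ with $v\in V$ the unique vector having $v_n=1$ modulo $V\cap\AA^{n-1}$.

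For the second statement, I interpret $\Gr[n-1]{r-1}\into\Gr[n]{r}$ as the map $W\mapsto W\oplus\AA\cdot e_n$, whose image parameterizes $r$-planes containing the last standard basis vector. The open complement $U'$ consists of $r$-planes $V\subset\AA^n$ with $e_n\notin V$; for these, the projection $\AA^n\to\AA^{n-1}$ killing $x_n$ restricts to an isomorphism $V\iso V'\subset\AA^{n-1}$, defining $\pi'\colon U'\to\Gr[n-1]{r}$. Any lift of a given $V'$ to such a $V$ is the graph of a unique linear map $V'\to\AA\cdot e_n$, so in families $U'$ is the total space of $\cal V^\vee$, the dual of the tautological rank-$r$ subbundle on $\Gr[n-1]{r}$. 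This is a vector bundle, hence an affine bundle, of rank $r$.

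The only remaining point to verify is that the bijections described on functors of points are genuine isomorphisms of schemes. I would check this locally on the standard affine cover of $\Gr[n]{r}$ by the charts indexed by choices of $r$-subsets of $\{1,\dots,n\}$, representing subspaces by matrices in reduced echelon form; everything is classical and I anticipate no serious difficulty beyond careful bookkeeping of the two closed immersions and their complements.
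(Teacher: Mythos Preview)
Your proposal is correct and follows essentially the same moduli-theoretic route as the paper, which likewise defines the projection on the first complement by \(P\mapsto P\cap A^{n-1}\) and appeals to triviality over the standard Zariski cover. Your identification of the second complement with the \emph{dual} of the tautological subbundle is in fact sharper than the paper's remark that it is the tautological \(r\)-plane bundle itself; either way one obtains a rank-\(r\) affine bundle, so the lemma is unaffected.
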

\begin{proof}
  Suppose \(\Spec(A)\) is an affine scheme mapping to \(S\). On
  \(\Spec(A)\)--valued points, the inclusion \(\Gr[n-1]{r}\clim\Gr[n]{r}\) is
  given by considering a projective submodule \(P\) of \(A^{n-1}\) as a
  submodule of \(A^n = A^{n-1}\oplus A\). It follows that the complement \(U\)
  of \(\Gr[n-1]{r}\) has \(\Spec(A)\)--valued points
  \[
    U(\Spec A) = \{P\subset A^n : \text{\(P\) is projective of rank \(r\) and
      \(P \not\subset A^{n-1}\oplus 0\)}\}.
  \]
  Given \(P\in U(\Spec A)\), the module \(P\cap (A^{n-1}\oplus 0)\) will be
  locally free of rank \(r-1\). This gives a map \(\varphi\colon U \to
  \Gr[n-1]{r-1}\) which is trivial over the standard Zariski--open cover of
  \(\Gr[n-1]{r-1}\) with fiber \(\AA^{n-r}\).

  The dual statement is proved similarly. In fact, the bundle \(V \to
  \Gr[n-1]{r}\) in question is the tautological \(r\)-plane bundle on
  \(\Gr[n-1]{r}\).
\end{proof}

The decomposition \(\Gr[n]{r} = U\cup V\) of the last lemma yields a
homotopy cocartesian square
\[
  \begin{tikzcd}
    \mathllap{U\setminus \Gr[n-1]{r-1} ={}} U\cap V \ar[r] \ar[d] &
    V\mathrlap{{}\wequiv \Gr[n-1]{r}} \ar[d] \\
    \mathllap{\Gr[n-1]{r-1}\wequiv {}}U \ar[r] & \Gr[n]{r}
  \end{tikzcd}
\]
in the \(\AA^1\)--homotopy category \(\H(S)\). It is immediate that this
decomposition of \(\Gr[n]{r}\) is stable under the action of \(\GL_{n-1}\times
1\subset \GL_n\). We can therefore pass to the bundles over \(\Gr{n-1}\)
associated to the universal \(\GL_{n-1}\)--torsor \(U_{n-1}(\infty)\) over
\(\Gr{n-1}\) and obtain a homotopy cocartesian square
\[
  \begin{tikzcd}
    (U_{n-1}(\infty)\times^{\GL_{n-1}} (U\cap V))/\Gr{n-1}\ar[r]\ar[d] & G_{r,n-1}\ar[d] \\
    G_{r-1,n-1}\ar[r] & (U_{n-1}(\infty)\times^{\GL_{n-1}}\Gr[n]{r})/\Gr{n-1}
  \end{tikzcd}
\]
in \(\SH(\Gr{n-1})\).

\begin{proposition}\label{prop:transfer-decomp}
  Suppose \(r<n\) and consider the composition
  \[
    \varphi\colon \Gr{n-1,+} \to[\incl] \Gr{n+} \to[{\transfer[n,r]}]\Gr{r+}\smashp\Gr{n-r,+}
  \]
  where \(\incl\) is given by the assignment \(P\mapsto P\oplus A\) on \(\Spec(A)\)--valued
  points. Then there is a map \(\psi\colon \Gr{n-1,+}\to
  \Gr{r-1,+}\smashp\Gr{n-r,+}\) in \(\SH(S)\) such that \(\varphi\) is the sum
  of the compositions
  \begin{align*}
    \Gr{n-1,+} \to[{\transfer[n-1,r]}]\Gr{r,+}\smashp\Gr{n-1-r,+} \to[\id\smashp\incl]\Gr{r,+}\smashp\Gr{n-r,+} \\
    \Gr{n-1,+} \to[{\transfer[n-1,r-1]}]\Gr{r-1,+}\smashp\Gr{n-r,+}\to[\incl\smashp\id]\Gr{r,+}\smashp\Gr{n-r,+} \\
\shortintertext{and}
    \Gr{r-1,+}\to[\psi]\Gr{r-1,+}\smashp\Gr{n-r,+} \to[\incl\smashp\id]\Gr{r,+}\smashp\Gr{n-r,+}.
  \end{align*}
\end{proposition}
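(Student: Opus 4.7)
The plan is to combine naturality of the absolute transfer in cartesian squares with additivity in homotopy pushouts, both from \autoref{prop:transfer-props}, applied to the homotopy cocartesian square in \(\SH(\Gr{n-1})\) displayed just before the statement. First, form the pullback \(E' = \Gr{n-1}\times_{\Gr{n}} G_{r,n}\), which by \autoref{lem:gr-fiber-bundle} is a Zariski--locally trivial bundle over \(\Gr{n-1}\) with smooth projective fibre \(\Gr[n]{r}\); \autoref{thm:local-dualizability} combined with motivic Atiyah duality (\autoref{thm:atiyah-duality}) then makes \(E'/\Gr{n-1}\) strongly dualizable. Applying \autoref{prop:transfer-props}(iii) to the resulting cartesian square with vertical arrows \(i_{r,n}\) and \(f'\colon E'\to\Gr{n-1}\) rewrites \(\varphi = \transfer[n,r]\circ(\incl/S)\) as \((i'/S)\circ\Tr{f'/S}\), where \(i'\colon E'\to G_{r,n}\) is the pullback projection.

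Second, I would apply additivity, \autoref{prop:transfer-props}(i), to the homotopy cocartesian square in \(\SH(\Gr{n-1})\) whose corners are \(X'/\Gr{n-1}\), \(G_{r,n-1}/\Gr{n-1}\), \(G_{r-1,n-1}/\Gr{n-1}\), and \(E'/\Gr{n-1}\), with \(X' = U_{n-1}(\infty)\times^{\GL_{n-1}}(U\cap V)\). The last three are strongly dualizable, so additivity writes \(\Tr{f'/\Gr{n-1}}\) as a sum of three compositions in \(\SH(\Gr{n-1})\), each being the relative transfer of one corner followed by its structural inclusion into \(E'/\Gr{n-1}\). Pushing forward along \(p\colon\Gr{n-1}\to S\) and post-composing with \(i'/S\) yields \(\varphi\) as a sum of three maps in \(\SH(S)\). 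By definition of the absolute transfer, \(p_\#\Tr{G_{r,n-1}/\Gr{n-1}} = \transfer[n-1,r]\) and \(p_\#\Tr{G_{r-1,n-1}/\Gr{n-1}} = \transfer[n-1,r-1]\); the remaining summand is declared to be \(\psi\).

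It then remains to identify the two resulting composites \(G_{r,n-1}/S\to E'/S\to G_{r,n}/S\) and \(G_{r-1,n-1}/S\to E'/S\to G_{r,n}/S\) with \(\id\smashp\incl\) and \(\incl\smashp\id\) respectively, under the equivalences \(G_{r,k}/S\wequiv\Gr{r,+}\smashp\Gr{k-r,+}\). I expect this to be the main obstacle: the maps \(G_{r,n-1}\to E'\) and \(G_{r-1,n-1}\to E'\) arise from the \(\GL_{n-1}\)--equivariant open--closed decomposition \(\Gr[n]{r} = \Gr[n-1]{r}\cup U\) of \autoref{lem:grassmann-decomp}, whereas \(E'\to G_{r,n}\) comes from extending \(\GL_{n-1}\)--torsors to \(\GL_n\)--torsors along the top-left block inclusion; by functoriality of the associated-bundle construction, their composites reduce to the standard direct-sum inclusions of monomorphism--quotient schemes \(U_\bullet(\infty)/(\GL_r\times\GL_\bullet)\), which upon pushforward to \(\SH(S)\) correspond exactly to \(\id\smashp\incl\) and \(\incl\smashp\id\). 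Once this bookkeeping is carried out, the decomposition of \(\varphi\) follows immediately.
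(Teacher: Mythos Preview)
Your overall framework mirrors the paper's proof closely: pull back \(G_{r,n}\) along \(\incl\colon\Gr{n-1}\to\Gr{n}\), use naturality of the absolute transfer (\autoref{prop:transfer-props}(iii), equivalently \autoref{lem:transfer-natural}) to rewrite \(\varphi\), then apply additivity to the homotopy cocartesian square built from \autoref{lem:grassmann-decomp}, and finally identify the summands. The identification of the first two summands with \((\id\smashp\incl)\circ\transfer[n-1,r]\) and \((\incl\smashp\id)\circ\transfer[n-1,r-1]\) is correct and proceeds as you sketch.

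There is, however, a genuine gap in your treatment of the third summand. You write that ``the remaining summand is declared to be \(\psi\)'', but this is not what the statement asserts: \(\psi\) is required to have target \(\Gr{r-1,+}\smashp\Gr{n-r,+}\), whereas the third term produced by additivity is a map
\[
  \Gr{n-1,+}\to X'/S\to E'/S\to G_{r,n}/S\wequiv\Gr{r,+}\smashp\Gr{n-r,+}.
\]
The content of the proposition is precisely that this map factors through \(\incl\smashp\id\colon\Gr{r-1,+}\smashp\Gr{n-r,+}\to\Gr{r,+}\smashp\Gr{n-r,+}\), and it is this factorization that \emph{defines} \(\psi\). The factorization is not automatic, and it is exactly what is used downstream in \autoref{lem:phi-comp}, where one needs the third term to die after composing with \(\Gr{r,+}\to\Gr{r}/\Gr{r-1}\).

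The paper supplies the missing step as follows. The map \(X'\to G_{r,n}\) is obtained by passing to associated bundles from the inclusion \(U\cap V\subset\Gr[n]{r}\), and this inclusion factors through \(U\). By \autoref{lem:grassmann-decomp} the zero section \(\Gr[n-1]{r-1}\subset U\) is an \(\AA^{1}\)-equivalence, so after taking associated bundles the map \(X'/S\to G_{r,n}/S\) factors through
\[
  G_{r-1,n-1}/S\wequiv\Gr{r-1,+}\smashp\Gr{n-r,+}\to[\incl\smashp\id]\Gr{r,+}\smashp\Gr{n-r,+}.
\]
This geometric observation about \(U\cap V\subset U\) is the one ingredient you omit; once you add it, your argument is complete and coincides with the paper's.
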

\begin{proof}
  Consider the homotopy pullback
  \[
    \begin{tikzcd}
      \mathllap{E={}}U_{n-1}(\infty)\times^{\GL_{n-1}}\Gr[n]{r}\ar[d]\ar[r] & \Gr{r}\times\Gr{n-r}\ar[d] \\
      \Gr{n-1}\ar[r, "\incl"'] & \Gr{n}
    \end{tikzcd}
  \]
  in \(\H(S)\). By the discussion following \autoref{lem:grassmann-decomp} we
  obtain a cofiber sequence
  \[
    X/\Gr{n-1} \to G_{r,n-1}\vee G_{r-1,n-1} \to E/\Gr{n-1}
  \]
  in \(\SH(\Gr{n-1})\) where \(X = U_{n-1}(\infty)\times^{\GL_{n-1}}(U\cap V)\).
  \Autoref{thm:transfer-additivity} then shows that
  \[
    \transfer[E/\Gr{n-1}] = \transfer[G_{r,n-1}] + \transfer[G_{r-1,n-1}] - \transfer[X/\Gr{n-1}]
  \]
  in \(\SH(\Gr{n-1})\). Passing to the absolute transfer and using
  \autoref{lem:transfer-natural} yields that \(\varphi\) is the sum of
  the compositions
  \begin{align*}
    &\Gr{n-1,+} \to[{\transfer[n-1,r]}] \Gr{r,+}\smashp\Gr{n-1-r,+} \to[\id\smashp\incl]\Gr{r,+}\smashp\Gr{n-r,+} \\
    &\Gr{n-1,+} \to[{\transfer[n-1,r-1]}] \Gr{r-1,+}\smashp\Gr{n-r,+} \to[\incl\smashp\id]\Gr{r,+}\smashp\Gr{n-r,+} \\
\shortintertext{and}
    &\Gr{n-1,+}\to X_+ \to \Gr{r,+}\smashp\Gr{n-r,+}
  \end{align*}
  in \(\SH(S)\). Here, the map \(X_+\to\Gr{r,+}\smashp\Gr{n-r,+}\) is obtained
  from the inclusion \(U\cap V\subset \Gr[n]{r}\) by passing to associated
  bundles. Now, this inclusion factors through the inclusion of \(U\) into
  \(\Gr[n]{r}\). By \autoref{lem:grassmann-decomp} the inclusion
  \(\Gr[n-1]{r-1}\subset U\) is an \(\AA^1\)--equivalence, being the zero
  section of an affine space bundle. Therefore
  \(X_+\to\Gr{r,+}\smashp\Gr{n-r,+}\) factors through the map
  \(\incl\smashp\id\colon\Gr{r-1,+}\smashp\Gr{n-r,+}\to\Gr{r,+}\smashp\Gr{n-r,+}\).
  This way we obtain the map \(\psi\) and the required decomposition of \(\transfer[n,r]\circ\incl\).
\end{proof}

\section{Proof of the Theorem}

We have the filtration
\[
    \Gr{0,+}\to[i_1] \Gr{1,+} \to[i_2] \dots \to[i_n] \Gr{n,+} \to
    \dots\to[i_m] \Gr{m,+}
\]
and we have seen that for \(r \leq n\) the map
\(i_{r,n}\colon\Gr{r}\times\Gr{n-r}\to \Gr{n}\) admits an absolute transfer
\(\transfer[n,r]\colon\Gr{n,+}\to \Gr{r,+}\smashp \Gr{n-r,+}\) in the motivic
stable homotopy category \(\SH(S)\). Write \(f_{n,r}\colon \Gr{n,+}\to
\Gr{r,+}\) for the composition
\[
  \Gr{n,+} \to[{\transfer[n,r]}] \Gr{r,+}\smashp\Gr{n-r,+} \to[\proj] \Gr{r,+}
\]
and \(\phi_{n,r}\) for the composition
\[
  \Gr{n,+}\to[f_{n,r}] \Gr{r,+}\to\Gr{r}/{\Gr{r-1}}.
\]

\begin{lemma}\label{lem:phi-comp}
  With notation as above, for \(r < n\) the compositions
\[
  \Gr{n-1,+} \to[i_n] \Gr{n,+} \to[f_{n,r}] \Gr{r,+} \to \Gr{r}/{\Gr{r-1}}
\]
and
\[
  \Gr{n-1,+} \to[f_{n-1,r}] \Gr{r,+} \to \Gr{r}/{\Gr{r-1}}
\]
coincide.
\end{lemma}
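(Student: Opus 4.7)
The plan is to apply Proposition~\ref{prop:transfer-decomp} to \(\varphi = \transfer[n,r]\circ i_n\) and observe that, after post-composition with the projection \(\proj\colon\Gr{r,+}\smashp\Gr{n-r,+}\to\Gr{r,+}\) and the quotient map \(q\colon\Gr{r,+}\to\Gr{r}/\Gr{r-1}\), only one of the three summands produced by that proposition survives. Unwinding definitions, the first composition appearing in the statement equals \(q\circ\proj\circ\transfer[n,r]\circ i_n = q\circ\proj\circ\varphi\). Proposition~\ref{prop:transfer-decomp} writes \(\varphi\) as the sum of three maps, namely \((\id\smashp\incl)\circ\transfer[n-1,r]\), \((\incl\smashp\id)\circ\transfer[n-1,r-1]\), and \((\incl\smashp\id)\circ\psi\); it therefore suffices to compute \(q\circ\proj\) applied to each.

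For the first summand, I would use the identity \(\proj\circ(\id\smashp\incl) = \proj'\), where \(\proj'\colon\Gr{r,+}\smashp\Gr{n-1-r,+}\to\Gr{r,+}\) denotes the analogous projection. This holds because \(\proj\) identifies with the smash product of \(\id_{\Gr{r,+}}\) with the collapse \(\Gr{n-r,+}\to S^0\), and the collapse precomposed with \(\incl\) is the collapse on \(\Gr{n-1-r,+}\). Hence \(q\circ\proj\circ(\id\smashp\incl)\circ\transfer[n-1,r] = q\circ\proj'\circ\transfer[n-1,r] = q\circ f_{n-1,r} = \phi_{n-1,r}\), which is precisely the second composition in the statement. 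The remaining two summands both factor through \((\incl\smashp\id)\colon\Gr{r-1,+}\smashp\Gr{n-r,+}\to\Gr{r,+}\smashp\Gr{n-r,+}\); the parallel naturality \(\proj\circ(\incl\smashp\id) = \incl\circ\proj''\) (with \(\proj''\) projecting to \(\Gr{r-1,+}\)) together with the fact that \(\Gr{r}/\Gr{r-1}\) is by construction the cofiber of \(\incl\colon\Gr{r-1,+}\to\Gr{r,+}\) shows that \(q\circ\incl\) is null. Consequently both of these summands vanish after post-composition with \(q\circ\proj\), and we conclude \(q\circ\proj\circ\varphi = \phi_{n-1,r}\).

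There is no real obstacle here: the heavy lifting, namely the additivity of the transfer across the cofiber sequence attached to the Zariski decomposition of \(\Gr[n]{r}\) together with the crucial fact that the correction term \(\psi\) lands in \(\Gr{r-1,+}\smashp\Gr{n-r,+}\), was already carried out in Proposition~\ref{prop:transfer-decomp}. The present lemma is essentially the bookkeeping observation that passing to the quotient \(\Gr{r}/\Gr{r-1}\) annihilates precisely the two summands that factor through \(\Gr{r-1,+}\), leaving only the \enquote{diagonal} contribution \(\phi_{n-1,r}\).
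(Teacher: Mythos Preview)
Your proof is correct and follows essentially the same approach as the paper's own argument: apply \autoref{prop:transfer-decomp}, observe that the two summands factoring through \(\incl\colon\Gr{r-1,+}\to\Gr{r,+}\) vanish upon composition with the quotient map to \(\Gr{r}/\Gr{r-1}\), and identify the remaining summand with \(\phi_{n-1,r}\). The only cosmetic difference is that the paper lumps the second and third summands into a single map through \(\Gr{r-1,+}\smashp\Gr{n-r,+}\) before killing it, whereas you treat them separately; this changes nothing.
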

\begin{proof}
  By \autoref{prop:transfer-decomp} the composition \(f_{n,r}\circ i_n\) is a
  sum of two compositions
  \begin{align*}
    &\Gr{n-1,+} \to[{\transfer[n-1,r]}] \Gr{r,+}\smashp\Gr{n-1-r,+} \to[\id\smashp\incl]\Gr{r,+}\smashp\Gr{n-r,+}\to[\proj]\Gr{r,+} \\
    \shortintertext{and}
    &\Gr{n-1,+}\to\Gr{r-1,+}\smashp\Gr{n-r,+}\to[\incl\smashp\id]\Gr{r,+}\smashp\Gr{n-r,+}\to[\proj]\Gr{r,+}
  \end{align*}
  in \(\SH(S)\). But the composition
  \[
    \Gr{r-1,+}\to[\incl]\Gr{r,+} \to \Gr{r}/{\Gr{r-1}}
  \]
  vanishes. Therefore, \(f_{n,r}\circ i_n\) coincides with the composition
  \[
    \Gr{n-1,+}\to[f_{n-1,r}]\Gr{r,+}\to\Gr{r}/{\Gr{r-1}}
  \]
  in \(\SH(S)\).
\end{proof}

\begin{proof}[{Proof of \autoref{thm:main}}]
  Proceeding by induction on \(n\), assume that
\[
  \Phi = \bigvee_{r=0}^{n-1} \phi_{n-1,r}\colon \Gr{n-1,+} \to \bigvee_{r=0}^{n-1} \Gr{r}/{\Gr{r-1}}
\]
is an equivalence in \(\SH(S)\). Because of \autoref{lem:phi-comp} we have a
commutative diagram
\[
  \begin{tikzcd}
    {} & \Gr{n,+}\ar[dr, "\Phi'"] & {} \\
    \Gr{n-1,+} \ar[rr, "\Phi"'] \ar[ur, "i_n"] & {} & \displaystyle\bigvee_{r=0}^{n-1}\Gr{r}/{\Gr{r-1}}
  \end{tikzcd}
\]
where \(\Phi' = \bigvee_{r=0}^{n-1}\phi_{n,r}\). It follows that \(\Phi^{-1}\circ\Phi'\circ i_n \wequiv \id\),
\ie~\(i_n\) admits a left inverse. That is to say, the cofiber sequence
\[
  \Gr{n-1,+} \to[i_n]\Gr{n,+} \to \Gr{n}/\Gr{n-1}
\]
splits and yields an equivalence
\[
  \Gr{n,+} \to[{(\Phi^{-1}\Phi') \vee \phi_{n,n}}] \Gr{n-1,+} \vee \Gr{n}/{\Gr{n-1}}
\]
since \(\phi_{n,n}\) is by definition the canonical projection. Post-composing
with \(\Phi\vee{\id}\) then shows that the stable map
\(\Phi'\vee\phi_{n,n}\colon \Gr{n,+}\to\bigvee_{r=0}^n \Gr{r}/{\Gr{r-1}}\)
is an equivalence in \(\SH(S)\) as well.
\end{proof}

\appendix
\section{Stable Motivic Homotopy Theory of Smooth Ind-Schemes}\label{sec:ind-schemes}

We freely use the theory of presentable \infcats as developed in
\parencite[section~5.5.3]{mr2522659}. The \infcat of presentable \infcats with
left adjoints as morphisms is denoted \(\PrL\) while the \infcat of presentable
\infcats with right adjoints as morphisms is denoted \(\PrR\). There is an
equivalence \(\PrL\wequiv(\PrR)^\Op\) of \infcats which is the identity on
objects and sends a left adjoint functor to its right adjoint. Both \(\PrL\) and
\(\PrR\) are complete and cocomplete and the homotopy limits in both \(\PrL\) and
\(\PrR\) coincide with homotopy limits in the \infcat of \infcats.

\begin{definition}
  A \emph{smooth ind-scheme} over \(S\) is an object of \(\Ind(\Sm{S})\), the \infcat of
  ind-objects in the category of smooth schemes over \(S\) with arbitrary
  morphisms between them. A morphism of ind-schemes is smooth if it can be presented
  as a colimit of smooth morphisms in \(\Sm{S}\).
\end{definition}

The goal of this section will be to generalize the definition of the stable
motivic homotopy category \(\SH\) to smooth ind-schemes over \(S\). Our approach
is to use part of the six functor formalism for \(\SH\), as established in
\parencite{ayoubi,ayoubii} for noetherian schemes and extended to arbitrary
schemes in \parencite[Appendix~C]{MR3302973}. An overview of the standard
functorialities, at least at the level of triangulated categories, can be found
in \parencite{2009arXiv0912.2110C}.

The first functoriality of \(\SH\) can be summarized as follows. For every
morphism \(f\colon X\to Y\) between smooth schemes over \(S\) we have an
adjunction
\[
  \adjunction{f^*}{\SH(X)}{\SH(Y)}{f_*}
\]
between the stable presentable \infcats \(\SH(X)\) and \(\SH(Y)\). These
adjunctions assemble into functors \(\SH^*\colon \Sm{S}^\Op \to \PrL\) and
\(\SH_*\colon \Sm{S} \to \PrR\) which are naturally equivalent after composing
with the equivalence \(\PrL\wequiv (\PrR)^\Op\). If \(f\colon X\to Y\) is smooth,
then there is an additional adjunction
\[
  \adjunction{f_\#}{\SH(Y)}{\SH(X)}{f^*}.
\]
These assemble into a functor \(\SH_\#\colon \Sm{S,\sm}\to\PrL\) from the wide
subcategory of \(\Sm{S}\) consisting of smooth morphisms between smooth schemes
over \(S\). There are various exchange transformations associated with a
cartesian square
\[
  \begin{tikzcd}
    \bullet \ar[r, "g"] \ar[d, "q"'] & \bullet \ar[d, "p"] \\
    \bullet \ar[r, "f"'] & \bullet
  \end{tikzcd}
\]
in \(\Sm{S}\), of which we only mention the transformation
\[
  \Ex_\#^*\colon g_\#q^* \to p^*f_\#
\]
when \(f\) and hence \(g\) is smooth. More details on these exchange
transformations may be found in \parencite{2009arXiv0912.2110C}.

Because \(\PrR\) is cocomplete, the functor \(\SH_*\) naturally extends to a
functor
\[
  \SH_*\colon \Ind(\Sm{S}) \to \PrR
\]
and we obtain a functor
\[
  \SH^*\colon \Ind(\Sm{S})^\Op \to \PrL
\]
by again composing with the equivalence \(\PrL\wequiv(\PrR)^\Op\).

More explicitly, if \((X_i)_{i\in I}\) is a filtered diagram of smooth schemes
over \(S\) and \(X = \colim_i X_i\) as an ind-scheme over \(S\), then
\[
  \SH^*(X) = \holim_i \SH^*(X_i)\quad\text{and}\quad\SH_*(X) = \hocolim_i \SH_*(X_i).
\]
Note that \(\SH^*(X)\) and \(\SH_*(X)\) are equivalent \infcats since homotopy
limits along left adjoints in \(\PrL\) correspond to homotopy colimits along
their right adjoints in \(\PrR\), see \parencite[section~5.5.3]{mr2522659}. This
description of \(\SH(X)\) also shows that it inherits the structure of a closed
symmetric monoidal, stable, presentable \infcat, see \parencite[section~3.4.3,
Proposition~4.8.2.18]{higheralgebra}.

The adjunction \(f^*\dashv f_*\) for a morphism \(f\colon X\to Y\) of
ind-schemes is obtained by presenting \(f\) as a colimit of maps \(f_i\colon
X_i\to Y_i\) between schemes over \(S\) and then taking \(f^*\) to be the
functor induced on the homotopy limits in \(\PrL\) and \(f_*\) the functor
induced on the homotopy colimits in \(\PrR\).

It remains to construct the extra left-adjoint \(f_\#\) for a smooth map \(f\)
between ind-schemes over \(S\). First, a morphism \(f\colon X\to Y\) between
ind-schemes is smooth if and only if it is a filtered colimit of smooth maps
\(f_i\colon X_i \to Y_i\). Each \(f_i^*\) admits a left adjoint \(f_{i\#}\) and
since \(\PrR\) is stable under limits, the functor \(f^*\colon
\SH^*(Y)\to\SH^*(X)\) admits a left adjoint as well. That is to say,
\(\SH^*\colon\Ind(\Sm{S})^\Op\to\PrL\) restricts to a functor
\(\SH^*\colon\Ind(\Sm{S})_{\sm}^\Op \to \PrR\) from the wide subcategory of
\(\Ind(\Sm{S})\) consisting of smooth maps between smooth ind-schemes over
\(S\). Composing with the equivalence \(\PrL\wequiv (\PrR)^\Op\) then yields the
functor
\[
  \SH_\#\colon\Ind(\Sm{S})_\sm\to\PrL.
\]
In summary, we have the following proposition.
\begin{proposition}
  For every ind-scheme \(X\) over \(S\), there is a closed symmetric monoidal,
  stable, presentable \infcat \(\SH(X)\). For every morphism \(f\colon X\to Y\)
  between ind-schemes there is an associated adjunction
  \[
    \adjunction{f^*}{\SH(Y)}{\SH(X)}{f_*}
  \]
  with \(f^*\) a monoidal functor. If \(f\) is smooth then there is an
  additional adjunction
  \[
    \adjunction{f_\#}{\SH(X)}{\SH(Y)}{f^*}.
  \]
  These data are functorial in \(f\) and admit various natural exchange
  transformations. If \(X\) happens to be a smooth scheme over \(S\) then this
  version of \(\SH(X)\) is naturally equivalent to the usual construction.
\end{proposition}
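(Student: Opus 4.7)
The strategy is to assemble everything constructed in the discussion preceding the statement and check that the pieces are coherent. Starting from $\SH_*\colon\Sm{S}\to\PrR$ given by the six-functor formalism on schemes, I would left Kan extend along $\Sm{S}\to\Ind(\Sm{S})$ using cocompleteness of $\PrR$, then transport along $\PrL\wequiv(\PrR)^\Op$ to obtain $\SH^*\colon\Ind(\Sm{S})^\Op\to\PrL$; concretely $\SH(X)=\holim_i\SH(X_i)$ in $\PrL$ for a presentation $X=\colim_i X_i$. The adjunction $f^*\dashv f_*$ for an arbitrary morphism $f\colon X\to Y$ then falls out of these two functors by choosing compatible presentations of $f$.

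To pin down the closed symmetric monoidal, presentable, stable structure on $\SH(X)$ with $f^*$ monoidal, I would observe that on smooth schemes $\SH^*$ already refines to a functor into $\operatorname{CAlg}(\PrL)$, because each pullback of motivic spectra is a symmetric monoidal colimit-preserving functor. Computing the limit in $\operatorname{CAlg}(\PrL)$ equips $\SH(X)$ with the required closed symmetric monoidal structure, and the universal arrows out of the limit remain symmetric monoidal, so $f^*$ is monoidal for an arbitrary $f$. Since stability and presentability are preserved under limits in $\PrL$, this gives the full structure in one go.

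For smooth $f\colon X\to Y$, I would use that $f$ is by definition a filtered colimit of smooth morphisms $f_i\colon X_i\to Y_i$ of schemes, each of which carries a further adjunction $f_{i\#}\dashv f_i^*$. Restricted to smooth morphisms, $\SH^*$ therefore factors through $\PrR$, and passing back across $\PrL\wequiv(\PrR)^\Op$ yields $\SH_\#\colon\Ind(\Sm{S})_\sm\to\PrL$ and hence the adjunction $f_\#\dashv f^*$. The standard exchange transformations, notably $\Ex_\#^*\colon g_\#q^*\to p^*f_\#$ for a cartesian square of smooth maps, are constructed fiberwise on a presentation of the square as a filtered colimit of cartesian squares of schemes and assembled via the universal property of the limit.

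Compatibility with the usual $\SH(X)$ when $X$ is a smooth scheme is automatic, since such an $X$ sits in $\Ind(\Sm{S})$ as its own one-object constant diagram. The main obstacle is not any single step but coherent bookkeeping: one must verify that the extension of $\SH_*$ from $\Sm{S}$ to $\Ind(\Sm{S})$ can simultaneously be performed in $\PrR$, in $\operatorname{CAlg}(\PrL)$, and with extra left adjoints over the smooth subcategory, so that $f^*$, $f_*$, $f_\#$, and their exchanges coexist compatibly. Because all the relevant \infcats of structured presentable \infcats admit filtered colimits and the forgetful functors between them preserve such colimits, this is formal but careful work and constitutes the only nontrivial aspect of the argument.
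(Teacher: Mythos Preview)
Your proposal is correct and follows essentially the same route as the paper: the proposition is stated there as a summary of the preceding discussion, which extends $\SH_*$ from $\Sm{S}$ to $\Ind(\Sm{S})$ via cocompleteness of $\PrR$, transports along $\PrL\wequiv(\PrR)^\Op$, reads off the closed symmetric monoidal stable presentable structure from the limit description (citing Lurie's \emph{Higher Algebra} where you invoke $\operatorname{CAlg}(\PrL)$ explicitly), and obtains $f_\#$ for smooth $f$ from the levelwise left adjoints $f_{i\#}$. Your added remarks on coherence bookkeeping and on exchange transformations are in the same spirit and, if anything, slightly more detailed than what the paper spells out.
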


Following \parencite{arxiv180610108L}, for a smooth morphism \(f\colon X\to Y\)
of ind-schemes over \(S\) we define \(X/Y = f_\#(\one{X})\in\SH(Y)\) where
\(\one{X}\) denotes the monoidal unit in \(\SH(X)\). In particular, if \(Y=S\),
we see that any smooth ind-scheme \(X\) over \(S\) determines an object
\(X/S\in\SH(S)\). If \(X\) is a smooth scheme over \(S\), then \(X/S\) is
canonically equivalent to the \(\PP^1\)--suspension spectrum of \(X\) in
\(\SH(S)\); see \parencite[Lemma~C.2]{MR3205601}.

\begin{lemma}\label{lem:ind-dualizability}
  Suppose \(B\) is a smooth ind-scheme over \(S\) and \(E\in\SH(B)\). If \(B\)
  is presented as a filtered colimit \(B = \colim_i B_i\) of smooth schemes in
  \(\Ind(\Sm{S})\), let \(f_i\colon B_i\to B\) be the canonical map for each
  \(i\). Then \(E\in\SH(B)\) is strongly dualizable if and only if
  \(f_i^*E\in\SH(B_i)\) is strongly dualizable for every \(i\).
\end{lemma}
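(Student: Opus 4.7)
The forward direction is immediate: for each index $i$ the pullback $f_i^*\colon\SH(B)\to\SH(B_i)$ is a symmetric monoidal left adjoint, and symmetric monoidal functors send a duality datum to a duality datum. Hence if $E$ is strongly dualizable in $\SH(B)$, then $f_i^*E$ is strongly dualizable in $\SH(B_i)$, with dual $f_i^*(E^\vee)$.

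For the converse, the plan is to exploit the limit description $\SH(B)\wequiv\holim_i\SH(B_i)$ built into the construction of $\SH$ on ind-schemes. The key refinement is that this equivalence upgrades to one in the \infcat of symmetric monoidal presentable \infcats with monoidal left adjoints as morphisms; this is available because $\SH^*$ is naturally valued in that \infcat and the forgetful functor back to $\PrL$ creates limits. Under this enhanced identification, an object $E\in\SH(B)$ is equivalent to a coherently compatible system $(E_i)_{i}$ with $E_i \wequiv f_i^*E\in\SH(B_i)$ and transition equivalences $f_{ij}^*E_j\wequiv E_i$ for each structural map $f_{ij}\colon B_i\to B_j$ in the presenting diagram. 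Assuming each $f_i^*E$ is strongly dualizable with dual $D_i$, the transition functors $f_{ij}^*$ are symmetric monoidal, so they take duality data to duality data. The essentially unique nature of duals then yields canonical equivalences $f_{ij}^*D_j\wequiv D_i$ compatible with the given ones on $E$, and the $D_i$ therefore descend to an object $D\in\SH(B)$. The level-wise coevaluation and evaluation maps descend by the same reasoning, exhibiting $D$ as a dual of $E$.

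The main obstacle is verifying the symmetric monoidal enhancement of the limit description of $\SH(B)$, rather than the duality argument itself. Once that enhancement is in hand, the rest is a general categorical fact: for a limit $\mathcal C\wequiv\lim_i\mathcal C_i$ of symmetric monoidal \infcats, the full subcategory $\mathcal C^{\mathrm{dbl}}$ of strongly dualizable objects is detected componentwise, since the assignment $\mathcal C\mapsto\mathcal C^{\mathrm{dbl}}$ commutes with limits. Invoking this directly at the level of symmetric monoidal \infcats yields the equivalence between dualizability of $E$ in $\SH(B)$ and dualizability of every $f_i^*E$ in $\SH(B_i)$.
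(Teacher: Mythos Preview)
Your proposal is correct and follows essentially the same route as the paper: both use the symmetric monoidal limit description \(\SH(B)\wequiv\lim_i\SH(B_i)\) and the general fact that strong dualizability in such a limit is detected componentwise. The paper compresses your argument into a one-line citation of \parencite[Proposition~4.6.1.11]{higheralgebra}, while you spell out the underlying mechanism (uniqueness of duals, descent of the duality data); note also that the symmetric monoidal enhancement you flag as the ``main obstacle'' is already established earlier in the appendix via \parencite[Proposition~4.8.2.18]{higheralgebra}, so no additional work is needed there.
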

\begin{proof}
  This follows from \parencite[Proposition~4.6.1.11]{higheralgebra} since we
  have \(\SH(B)\wequiv \lim_i\SH(B_i)\).
\end{proof}

\begin{proposition}\label{prop:objects-of-slash-S}
  Suppose an ind-scheme \(X\) is presented as a colimit \(X = \colim_i X_i\) in
  \(\Ind(\Sm{S})\). Then there is a natural equivalence \(X/S\wequiv \hocolim_i
  X_i/S\) in \(\SH(S)\).
\end{proposition}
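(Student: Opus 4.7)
The plan is to combine the adjunction $\pi_\#\dashv\pi^*$ for the structure map $\pi\colon X\to S$ with the limit presentation $\SH(X)\wequiv\lim_i \SH(X_i)$ in $\PrL$, and then conclude via Yoneda.

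First I would identify the distinguished objects under this equivalence. Because each transition functor $g_{ij}^*\colon \SH(X_j)\to\SH(X_i)$ is symmetric monoidal, the family $(\one{X_i})_i$ forms a compatible system and corresponds to $\one{X}\in\SH(X)$. Similarly, for any $E\in\SH(S)$, the pullback $\pi^*E$ corresponds to the family $(\pi_i^*E)_i$, using $g_{ij}^*\pi_j^*\wequiv\pi_i^*$.

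Next, for each $E\in\SH(S)$ I would compute the chain of natural equivalences
\[
  \map_{\SH(S)}(X/S,E) \wequiv \map_{\SH(X)}(\one{X},\pi^*E) \wequiv \lim_i \map_{\SH(X_i)}(\one{X_i},\pi_i^*E) \wequiv \lim_i \map_{\SH(S)}(X_i/S,E),
\]
where the outer equivalences are the adjunctions $\pi_\#\dashv\pi^*$ and $\pi_{i\#}\dashv\pi_i^*$, and the middle one uses that mapping spaces in a limit of \infcats are computed termwise. The rightmost limit equals $\map_{\SH(S)}(\hocolim_i X_i/S,E)$ by the universal property of colimits, and the Yoneda lemma then yields the claimed natural equivalence $X/S\wequiv\hocolim_i X_i/S$ in $\SH(S)$.

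The main technical point is the middle step: that the mapping space in $\lim_i\SH(X_i)$ between objects represented by compatible families can be computed as the limit of the pointwise mapping spaces. This is a standard fact about limits of \infcats, coming from the description of such a limit as the \infcat of coherent sections of the associated Cartesian fibration. Once it is in hand, the rest of the argument is a formal diagram chase through adjunctions, and naturality of the equivalence in the presentation of $X$ is automatic from the construction.
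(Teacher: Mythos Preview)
Your proof is correct and follows essentially the same route as the paper's own argument: adjunction \(\pi_\#\dashv\pi^*\), the identification of mapping spaces in \(\SH(X)\wequiv\lim_i\SH(X_i)\) as limits of the pointwise mapping spaces, the adjunctions \(\pi_{i\#}\dashv\pi_i^*\), and Yoneda. The only difference is that you spell out explicitly why \(\one{X}\) and \(\pi^*E\) correspond to the families \((\one{X_i})_i\) and \((\pi_i^*E)_i\) and justify the termwise computation of mapping spaces, which the paper leaves implicit.
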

\begin{proof}
  Write \(\pi\colon X\to S\) and \(\pi_i\colon X_i\to S\) for the structure
  morphisms. Suppose \(Y\in\SH(S)\) is arbitrary. Then we have natural
  equivalences
  \begin{align*}
    \map_{\SH(S)}(\pi_\#\one{X}, Y) &\wequiv \map_{\SH(X)}(\one{X}, \pi^*Y) \\
                                    &\wequiv \holim_i\map_{\SH(X_i)}(\one{X_i}, \pi_i^*Y) \\
                                    &\wequiv \holim_i \map_{\SH(S)}(\pi_{i\#}\one{X_i}, Y) \\
                                    &\wequiv \map_{\SH(S)}(\hocolim_i X_i/S, Y)
  \end{align*}
  of mapping spaces. The Yoneda lemma implies that \(X/S =\pi_\#\one{X}\wequiv \hocolim_i
  X_i/S\) in \(\SH(S)\).
\end{proof}

This proposition allows us to extend the definition of the functor
\(\_/S\colon\Sm{S}\to\SH(S)\) in \parencite{arxiv180610108L} to ind-schemes. The
functor \(\_/S\colon\Sm{S}\to\SH(S)\) extends uniquely up to natural equivalence
to a functor \(\_/S\colon \Ind(\Sm{S})\to\SH(S)\) because \(\SH(S)\) is
cocomplete. By \autoref{prop:objects-of-slash-S} this coincides on objects with
the previous construction \(\pi_\#(\one{X})\) for a smooth ind-scheme
\(\pi\colon X\to S\).

\printbibliography

\end{document}